\newtheorem{theorem}{Theorem}[section]
\newtheorem{corollary}[theorem]{Corollary}
\newtheorem{construction}[theorem]{Construction}
\newtheorem{lemma}[theorem]{Lemma}
\newtheorem{conjecture}[theorem]{Conjecture}
\newtheorem{proposition}[theorem]{Proposition}
\newtheorem{definition}[theorem]{Definition}
\theoremstyle{definition}
\theoremstyle{remark}
\newtheorem{rem}[theorem]{Remark}
\theoremstyle{remark}
\newcommand{\beql}[1]{\begin{equation}\label{#1}}
\newcommand{\eeq}{\end{equation}}
\begin{document}

\title[Complex Hadamard matrices of order $6$]{Complex Hadamard matrices of order $6$:\\ a four-parameter family}

\author{Ferenc Sz\"oll\H{o}si}

\date{August, 2010., Preprint}

\address{Ferenc Sz\"oll\H{o}si: Department of Mathematics and its Applications, Central European University, H-1051, N\'ador u. 9, Budapest, Hungary.}\email{szoferi@gmail.com}

\thanks{This work was supported by the Hungarian National Research Fund OTKA K-77748}

\dedicatory{Dedicated to Professor Uffe Haagerup on the occasion of his $60$th birthday}

\begin{abstract}
In this paper we construct a new, previously unknown four-parameter family of complex Hadamard matrices of order $6$, the entries of which are described by algebraic functions of roots of various sextic polynomials. We conjecture that the new, generic family $G_6^{(4)}$ together with Karlsson's degenerate family $K_6^{(3)}$ and Tao's spectral matrix $S_6^{(0)}$ form an exhaustive list of complex Hadamard matrices of order $6$. Such a complete characterization might finally lead to the solution of the famous MUB-$6$ problem.
\end{abstract}

\maketitle

{\bf 2010 Mathematics Subject Classification.} Primary 05B20, secondary 46L10.
	
{\bf Keywords and phrases.} {\it Complex Hadamard matrices, mutually unbiased bases, MUB}

\section{Introduction}\label{sec:intro}
Complex Hadamard matrices form an important family of orthogonal arrays with the additional unimodularity constraint imposed on their entries. These matrices obey the algebraic identity $HH^\ast=n I_n$ where $\ast$ stands for the Hermitean transpose, and $I_n$ is the identity matrix of order $n$. They appear in various branches of mathematics frequently, including linear algebra \cite{godsil}, coding- and operator theory \cite{haagerup,popa} and harmonic analysis \cite{KM,tao}. They play an important r\^ole in quantum optics, high-energy physics, and they are one of the key ingredients to quantum teleportation- and dense coding schemes \cite{wer} and mutually unbiased bases (MUBs) \cite{bengtsson}. An example of complex Hadamard matrices is the Fourier matrix $F_n$, well-known to exists for all $n$. It is natural to ask how does a ``typical'' complex Hadamard matrix of order $n$ look like, and a satisfying answer to this question can be given provided we have a complete characterization of Hadamard matrices of order $n$ at our disposal. These types of problems, however, are notoriously difficult even for small $n$. Naturally, one is interested in the essentially different matrices only, and we identify two matrices $H$ and $K$ and say that they are equivalent if $H=P_1D_1KD_2P_2$ for some unitary diagonal matrices $D_1, D_2$ and permutational matrices $P_1,P_2$. Recall that a complex Hadamard matrix is dephased, if all entries in its first row and column are equal to $1$. While studying and classifying real Hadamard matrices is naturally a discrete, finite problem which can be handled by deep algebraic methods and sophisticated computer programs to some extent, the complex case, however, behaves essentially different. In particular, due to the appearance of various parametric families one cannot hope for a finite list of inequivalent matrices, but rather for a finite list of constructions, each of them leading to an infinite family of complex Hadamard matrices.

The complete classification of complex Hadamard matrices is available up to order $n=5$ only. It is trivial that $F_n$ is the unique complex Hadamard matrix for orders $n\leq 3$. The case $n=4$ is still elementary, and it was shown by Craigen that all complex Hadamard matrices of order $4$ belong to an infinite, continuous one-parameter family \cite{craigen}. In order $5$ we have uniqueness again, a result which is absolutely non-trivial already. In particular, Lov\'asz was the first who showed \cite{L} that $F_5$ is the only circulant complex Hadamard matrix in this order, and a decade later Haagerup managed to prove the uniqueness of $F_5$ by discovering an algebraic identity (cf.\ formula \eqref{HEQ}) relating the matrix entries in a surprising way \cite{haagerup}.

In order $6$ various one- \cite{BN,dita,MSZ,Z}, two- \cite{K3,star} and three-parameter families \cite{K1,K2} have been constructed recently and it is conjectured that these are part of a more general, four-parameter family of complex Hadamard matrices, yet to be discovered \cite{bengtsson}. This conjecture is supported by overwhelming numerical evidence \cite{skinner}, however so far only a fairly small subset of it was described by closed analytic formul\ae, including an isolated matrix $S_6^{(0)}$ and a three-parameter matrix $K_6^{(3)}$ \cite{K1,K2}.

The reason why the $6\times 6$ case received significant attention in the past couple of years is the fact that complex Hadamard matrices are closely related to mutually unbiased bases. Recall that two orthonormal bases of $\mathbb{C}^d$, $\mathcal{B}_1$ and $\mathcal{B}_2$ are unbiased if for every $e\in \mathcal{B}_1$, $f\in \mathcal{B}_2$ we have $\left|\left\langle e,f\right\rangle\right|^2=1/d$. A family of orthonormal bases is said to be mutually unbiased if every two of them are unbiased. The famous MUB-$6$ problem asks for the maximal number of mutually unbiased bases in $\mathbb{C}^6$. On the one hand this number is at least $3$, as there exists various infinite families of triplets of MUBs in this order \cite{Ph,star,Z}, on the other hand it is well-known that it cannot be larger than $7$ (cf.\ the references of \cite{bengtsson}). In fact, it is conjectured that a triplet is the best one can come up with in dimension $6$ \cite{Z}. The connection between MUBs and Hadamard matrices of order $6$ has been exploited in \cite{Ph} very recently, where a discretization scheme was offered to attack the problem and it was proved by means of computers, however, in a mathematically rigorous way, that the members of the two-parameter Fourier family $F_6^{(2)}(a,b)$ and its transpose cannot belong to a configuration of $7$ MUBs containing the standard basis in dimension $6$. One reasonable hope to finally settle the MUB-$6$ problem is to give a complete characterization of complex Hadamard matrices of order $6$ and apply the same technique to them.

The goal of this paper is to propose a general framework towards the complete classification of complex Hadamard matrices of order $6$. In particular, by characterizing the orthogonal triplets of rows in complex Hadamard matrices we generalize an observation of Haagerup \cite{haagerup} to obtain a new algebraic identity relating the matrix entries in an unexpected way. This is an essentially new tool to study complex Hadamard matrices of small orders, and one of the main achievements of this paper. We apply this result to obtain complex Hadamard matrices, moreover we conjecture that the the construction we present here reflects the true nature of complex Hadamard matrices of order $6$. It has the following three features: Firstly, it is general in contrast with the earlier attempts where always some additional extra structure was imposed on the matrices including self-adjointness \cite{BN}, symmetry \cite{MSZ}, circulant block structure \cite{star} or $H_2$-reducibility \cite{K1}. Secondly, it has $4$ degrees of freedom and thirdly all the entries of the obtained matrices can be described by algebraic functions of roots of various sextic polynomials. This suggests on the one hand the existence of a four-parameter family of complex Hadamard matrices of order $6$ and reminds us on the other hand the fact that the desired algebraical description where the entries are expressed by radicals might not be possible at all. However, from the applicational point of view, and in particular, to utilize the computer-aided attack of \cite{Ph} to the MUB-$6$ problem we shall need these matrices numerically anyway. 

The outline of the paper is as follows. In Section $2$ we briefly discuss the main ideas of the construction to motivate the various auxiliary results we prove there. The excited reader might want to skip this section at first, and jump right ahead to Section $3$, where the construction of the new family is presented from a high-level perspective. In Section $4$ we analyze the construction thoroughly.

\section{Preliminary results}
In this section we present the ingredients necessary to construct our new family of complex Hadamard matrices of order $6$, including a characterization of the mutual orthogonality of three rows in Hadamard matrices (cf.\ Theorem \ref{HTIO}). First, however, we would like to motivate these efforts by describing the main ideas of the construction.

We start with a submatrix
\beql{E}
E(a,b,c,d):=\left[\begin{array}{ccc}
1 & 1 & 1\\
1 & a & b\\
1 & c & d\\
\end{array}\right]
\eeq
and attempt to embed it into a complex Hadamard matrix of order $6$
\beql{G6}
G_6^{(4)}(a,b,c,d):=\left[\begin{array}{ccc|ccc}
1 & 1 & 1 & 1 & 1 & 1\\
1 & a & b & e & s_1 & s_2\\
1 & c & d & f & s_3 & s_4\\
\hline
1 & g & h & \ast & \ast & \ast\\
1 & t_1 & t_3 & \ast & \ast & \ast\\
1 & t_2 & t_4 & \ast & \ast & \ast\\
\end{array}\right]\equiv\left[\begin{array}{cc}
E & B\\
C & D\\
\end{array}\right].
\eeq
with $3\times 3$ blocks $E,B,C$ and $D$ in two steps, as follows. First we construct the submatrices $B$ and $C$ featuring unimodular entries to obtain three orthogonal rows and columns of $G_6$. Secondly we find the unique lower right submatrix $D$ to get a unitary matrix. Should the entries of this matrix become unimodular, we have found a complex Hadamard matrix. We conjecture that the submatrix $E$ can be chosen, up to equivalence, in a way that there will be only finitely many candidates for the blocks $B$ and $C$ and therefore we can ultimately decide whether the submatrix $E$ can be embedded into a complex Hadamard matrix. The resulting matrix $G_6$ can be thought as the ``Hadamard dilation'' of the operator $E$.

We shall heavily use the following through the paper without any further comment: the conjugate of a complex number of modulus $1$ is its reciprocal, and hence the conjugate of a multivariate polynomial with real coefficients depending on indeterminates of modulus $1$ is just the polynomial formed by entrywise reciprocal of the aforementioned indeterminates. We computed various Gr\"obner bases \cite{GB} in this paper with the aid of Mathematica. The reader is advised to use a computer algebra system for bookkeeping purposes and consult \cite{karol} for the standard notations for well-known complex Hadamard matrices such as $S_6^{(0)}, K_6^{(3)}$, etc.

We begin with recalling two elementary results from the existing literature.

\begin{lemma}\label{L1}
Suppose that we have a partial row $(1,a,b,e,\ast,\ast)$ composed from unimodular entries. Then one can specify some unimodular numbers $s_1$ and $s_2$ in place of the unknown numbers $\ast$ to make this row orthogonal to $(1,1,1,1,1,1)$ if and only if
\beql{eq1}
|1+a+b+e|\leq 2.
\eeq
\end{lemma}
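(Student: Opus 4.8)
The plan is to translate the orthogonality requirement into a single equation for the unknowns $s_1, s_2$ and then recognize it as the classical question of which complex numbers arise as a sum of two points on the unit circle. Since every entry of the all-ones row $(1,1,1,1,1,1)$ equals the real number $1$, the Hermitian inner product of the two rows is simply the sum of the entries of the partial row, so orthogonality is equivalent to
\[
1 + a + b + e + s_1 + s_2 = 0,
\]
that is, to $s_1 + s_2 = w$ where $w := -(1+a+b+e)$. Thus the lemma reduces to the assertion that unimodular $s_1, s_2$ with $s_1 + s_2 = w$ exist precisely when $|w| \le 2$, i.e.\ when $|1+a+b+e|\le 2$.

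For the necessity direction I would simply invoke the triangle inequality: if $|s_1|=|s_2|=1$ then $|w| = |s_1+s_2| \le |s_1|+|s_2| = 2$, so the bound $|1+a+b+e|\le 2$ is forced by the mere existence of such entries.

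For sufficiency I would show that every point of the closed disk of radius $2$ is realizable as a sum of two unit vectors, by placing them symmetrically about the direction of $w$. Writing $w = r\,e^{i\theta}$ with $0\le r\le 2$ and setting $\varphi := \arccos(r/2)$, which is well defined exactly because $0\le r/2\le 1$, the choice $s_1 = e^{i(\theta+\varphi)}$ and $s_2 = e^{i(\theta-\varphi)}$ yields $s_1+s_2 = 2\cos\varphi\, e^{i\theta} = r\,e^{i\theta} = w$, with $s_1, s_2$ unimodular by construction. This exhibits the required entries explicitly.

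The computation is entirely elementary and I do not anticipate a serious obstacle; the only point deserving a little care is the sufficiency step, where one must note that the hypothesis $|w|\le 2$ is precisely what makes $\arccos(r/2)$ well defined, so that the symmetric construction closes. Geometrically the statement records the familiar fact that the Minkowski sum of two unit circles is the disk of radius $2$, and the identity $s_1 + s_2 = -(1+a+b+e)$ is what couples this elementary observation to the embedding problem for $G_6^{(4)}$.
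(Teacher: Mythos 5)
Your proposal is correct and follows the same route as the paper: orthogonality is reduced to $s_1+s_2=-(1+a+b+e)$, necessity follows from the triangle inequality, and sufficiency from realizing any point of the closed disk of radius $2$ as a sum of two unit vectors. The paper dismisses the sufficiency step as ``easily seen geometrically,'' whereas you make it explicit via the symmetric $\arccos$ construction (which, incidentally, is exactly the content of the paper's subsequent Decomposition formula \eqref{s12}), so your write-up is if anything slightly more complete.
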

\begin{proof}
To ensure orthogonality, we need to have $1+a+b+e+s_1+s_2=0$ from which it follows that $|1+a+b+e|=|s_1+s_2|\leq 2$. It is easily seen geometrically, that in this case we can define the unimodular numbers required.
\end{proof}
The missing coordinates featuring in Lemma \ref{L1}, $s_1$ and $s_2$, can be obtained algebraically through the well-known
\begin{lemma}[(Decomposition formula, \cite{MSZ})]
Suppose that the rows $(1,1,1,1,1,1)$ and $(1,a,b,e,s_1,s_2)$ containing unimodular entries are orthogonal. Let us denote by $\Sigma:=1+a+b+e$, and suppose that $0<|\Sigma|\leq 2$. Then
\beql{s12}
s_{1,2}=-\frac{\Sigma}{2}\pm\mathbf{i}\frac{\Sigma}{|\Sigma|}\sqrt{1-\frac{|\Sigma|^2}{4}}.
\eeq
If $\Sigma=0$ then $s_1$ is independent from $a,b,e$ but $s_2=-s_1$.
\end{lemma}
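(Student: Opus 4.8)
The plan is to reduce the problem to solving a single monic quadratic whose roots are the desired coordinates $s_1,s_2$, and then to read off the explicit expression via the quadratic formula. First I would record the two constraints at our disposal: orthogonality with the all-ones row forces $s_1+s_2=-\Sigma$, exactly as in the proof of Lemma \ref{L1}, while unimodularity gives $|s_1|=|s_2|=1$. These two facts determine the pair $\{s_1,s_2\}$ up to interchange, which is why precisely one sign ambiguity should survive in the final answer.

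The key step is to extract the product $s_1s_2$ from these data. Using that the conjugate of a unimodular number is its reciprocal, I would conjugate the sum relation to obtain $1/s_1+1/s_2=-\overline{\Sigma}$, that is $(s_1+s_2)/(s_1s_2)=-\overline{\Sigma}$; substituting $s_1+s_2=-\Sigma$ and using $\Sigma\neq 0$ yields $s_1s_2=\Sigma/\overline{\Sigma}$. By Vieta's formulas $s_1,s_2$ are then the two roots of $z^2+\Sigma z+\Sigma/\overline{\Sigma}=0$, so the quadratic formula gives $s_{1,2}=-\Sigma/2\pm\tfrac12\sqrt{\Sigma^2-4\Sigma/\overline{\Sigma}}$.

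It remains to massage the discriminant into the stated shape. Factoring out $\Sigma/\overline{\Sigma}$ I would write $\Sigma^2-4\Sigma/\overline{\Sigma}=(\Sigma/\overline{\Sigma})(|\Sigma|^2-4)$. Since $\Sigma/\overline{\Sigma}=(\Sigma/|\Sigma|)^2$ and, by the hypothesis $0<|\Sigma|\leq 2$, the factor $|\Sigma|^2-4$ is nonpositive, the square root splits as $\pm(\Sigma/|\Sigma|)\cdot 2\mathbf{i}\sqrt{1-|\Sigma|^2/4}$; absorbing the sign into the overall $\pm$ reproduces \eqref{s12} verbatim. The degenerate case $\Sigma=0$ is disposed of separately: there the sum relation reads $s_1+s_2=0$, so $s_2=-s_1$, and any unimodular $s_1$ works irrespective of $a,b,e$, as claimed.

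The only genuinely delicate point is the branch bookkeeping in the square root: one must check that the ambiguity $\sqrt{\Sigma/\overline{\Sigma}}=\pm\Sigma/|\Sigma|$ is exactly the freedom of interchanging $s_1$ and $s_2$, so that no spurious solutions are introduced and both unimodular roots are genuinely recovered. Here the constraint $|\Sigma|\le 2$ is what guarantees that the radicand $1-|\Sigma|^2/4$ is a nonnegative real, which is precisely what keeps the resulting $s_{1,2}$ on the unit circle; this can be confirmed \emph{a posteriori} by a direct check that $|{-\Sigma/2}\pm\mathbf{i}(\Sigma/|\Sigma|)\sqrt{1-|\Sigma|^2/4}|^2=|\Sigma|^2/4+(1-|\Sigma|^2/4)=1$.
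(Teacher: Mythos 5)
Your proposal is correct, and it is a genuinely different (and more complete) argument than the one in the paper. The paper's own proof is a single sentence: it simply asserts that $s_1$ and $s_2$ are ``the unimodular numbers with $s_1+s_2=-\Sigma$,'' leaning on the geometric picture already invoked in Lemma \ref{L1} and leaving the reader to verify that the right-hand side of \eqref{s12} consists of exactly those two numbers. You instead \emph{derive} the formula: from unimodularity you conjugate the sum relation to get the product $s_1s_2=\Sigma/\overline{\Sigma}$, then Vieta's formulas turn the pair into the roots of $z^2+\Sigma z+\Sigma/\overline{\Sigma}=0$, and the quadratic formula plus careful handling of the branch of $\sqrt{\Sigma/\overline{\Sigma}}$ yields \eqref{s12}. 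What your route buys is threefold: existence and uniqueness of the pair $\{s_1,s_2\}$ come out simultaneously (the sign ambiguity is visibly just the swap $s_1\leftrightarrow s_2$); the role of the hypothesis $0<|\Sigma|\leq 2$ becomes transparent, since it is exactly what makes the radicand $1-|\Sigma|^2/4$ nonnegative and hence the roots unimodular; and the final modulus check closes the loop without any appeal to geometry. It is worth noting that your key intermediate identity $s_1s_2=\Sigma/\overline{\Sigma}=(s_1+s_2)^2/|s_1+s_2|^2$ is precisely the Sum-product formula that the author states separately and uses later when analyzing the generic example, so your proof in effect unifies the two statements. The paper's approach, by contrast, buys only brevity.
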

\begin{proof}
Clearly $s_1$ and $s_2$ are the unimodular numbers with $s_1+s_2=-\Sigma$.
\end{proof}
Now we proceed by investigating the orthogonality of triplets of rows. In order to do this, the following is a crucial
\begin{definition}[(Haagerup polynomial)]
The Haagerup polynomial $\mathcal{H}$ associated to the rows $(1,1,1,1,1,1), (1,a,b,e,\ast,\ast)$ and $(1,c,d,f,\ast,\ast)$ of a complex Hadamard matrix read
\[\mathcal{H}(a,b,c,d,e,f):=(1+a+b+e)(1+\overline{c}+\overline{d}+\overline{f})(1+c\overline{a}+d\overline{b}+f\overline{e}).\]
\end{definition}
The first result of ours is the following
\begin{theorem}\label{HTIO}
Suppose that we have the partial rows $(1,a,b,e,\ast,\ast)$ and $(1,c,d,f,\ast,\ast)$, composed from unimodular entries. Then one can specify some unimodular numbers $s_1,s_2,s_3$ and $s_4$ in place of the unknown numbers $\ast$ to make these rows together with $(1,1,1,1,1,1)$ mutually orthogonal if and only if
\beql{C11}
\mathcal{H}(a,b,c,d,e,f)=4-|1+a+b+e|^2-|1+c+d+f|^2-|1+c\overline{a}+d\overline{b}+f\overline{e}|^2
\eeq
with
\beql{NI}
|\mathcal{H}(a,b,c,d,e,f)|\leq 8.
\eeq
\end{theorem}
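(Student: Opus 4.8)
The plan is to reduce the statement to three scalar equations and analyze their simultaneous solvability. Write $\Sigma_1:=1+a+b+e$, $\Sigma_2:=1+c+d+f$ and $\Sigma_3:=1+c\overline{a}+d\overline{b}+f\overline{e}$, so that $\mathcal{H}=\Sigma_1\overline{\Sigma_2}\Sigma_3$. Mutual orthogonality of the three completed rows is equivalent to the system $s_1+s_2=-\Sigma_1$, $s_3+s_4=-\Sigma_2$ and $\overline{s_1}s_3+\overline{s_2}s_4=-\Sigma_3$ with all $s_i$ unimodular, the first two being the conditions already treated by Lemma \ref{L1} and the decomposition formula. I would prove the two implications separately, the forward one being a direct computation and the reverse one requiring an explicit construction.

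For the forward direction, assume such $s_1,\dots,s_4$ exist. The triangle inequality applied to each of the three equations gives $|\Sigma_1|,|\Sigma_2|,|\Sigma_3|\le 2$, whence $|\mathcal{H}|=|\Sigma_1|\,|\Sigma_2|\,|\Sigma_3|\le 8$, which is \eqref{NI}. To obtain \eqref{C11} I would substitute the three equations into $\mathcal{H}=\Sigma_1\overline{\Sigma_2}\Sigma_3$ and expand the triple product, repeatedly using $\overline{s_i}=1/s_i$. The six cross terms collapse into $2+2\mathrm{Re}(s_1\overline{s_2})+2\mathrm{Re}(s_3\overline{s_4})+2\mathrm{Re}(s_1\overline{s_2}\,\overline{s_3}s_4)$; in particular $\mathcal{H}$ turns out to be automatically real. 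Recognizing $|\Sigma_1|^2=2+2\mathrm{Re}(s_1\overline{s_2})$, $|\Sigma_2|^2=2+2\mathrm{Re}(s_3\overline{s_4})$ and $|\Sigma_3|^2=2+2\mathrm{Re}(s_1\overline{s_2}\,\overline{s_3}s_4)$ then converts this identity into exactly \eqref{C11}.

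The reverse direction is the crux. Assuming \eqref{C11} and \eqref{NI}, I would first show they force $|\Sigma_1|,|\Sigma_2|,|\Sigma_3|\le 2$, so that the decomposition formula applies. Writing $x,y,z$ for these moduli, \eqref{C11} reads $\pm xyz=4-x^2-y^2-z^2$. In the $+$ case the identity $x^2+y^2+z^2+xyz=4$ bounds each variable by $2$ immediately; in the $-$ case $x^2+y^2+z^2-xyz=4$ together with \eqref{NI} forces the same after a short case analysis, the only extremal configuration being $x=y=z=2$. Assuming now $\Sigma_1,\Sigma_2\ne 0$, the decomposition formula lets me write $s_{1,2}=-\Sigma_1/2\pm w$ and $s_{3,4}=-\Sigma_2/2\pm v$ with $w,v$ of prescribed moduli and arguments orthogonal to $\Sigma_1,\Sigma_2$; the third equation then reduces to $W:=\Sigma_3+\overline{\Sigma_1}\Sigma_2/2=-2\epsilon PR$, where $R:=\overline{\Sigma_1}\Sigma_2/(xy)$ is unimodular, $P:=\sqrt{(1-x^2/4)(1-y^2/4)}\ge 0$, and $\epsilon=\pm 1$ records the relative choice of signs for $w$ and $v$.

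It then remains to produce a sign $\epsilon$ for which $W=-2\epsilon PR$ holds, and this is where \eqref{C11} is used in full. I would verify the two identities $\overline{R}\,W=\mathcal{H}/(xy)+xy/2$ and $|W|^2=z^2+\mathcal{H}+x^2y^2/4$. The first is real precisely because \eqref{C11} makes $\mathcal{H}$ real, which forces $W$ to be a real multiple of $R$ (the required collinearity); the second, compared with $4P^2=4-x^2-y^2+x^2y^2/4$, equals $4P^2$ if and only if $\mathcal{H}=4-x^2-y^2-z^2$, i.e. exactly under \eqref{C11}. Hence $|W|=2P$, the sign of the real number $\overline{R}\,W$ selects the admissible $\epsilon$, and we recover unimodular $s_1,\dots,s_4$ solving the whole system. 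The degenerate cases $\Sigma_1=0$ or $\Sigma_2=0$ I would dispatch separately: there the corresponding pair, say $s_2=-s_1$, carries a free phase, and \eqref{C11} gives $|\Sigma_3|=\sqrt{4-|\Sigma_2|^2}=|s_3-s_4|$, so this phase can be tuned to satisfy the third equation. I expect the main obstacle to be precisely this reverse construction: extracting the bounds $|\Sigma_i|\le 2$ from \eqref{NI} in the $-$ case, and repackaging the single complex equation \eqref{C11} into the two real conditions (collinearity of $W$ with $R$, and $|W|=2P$) that simultaneously pin down the argument and the modulus of the prescribed $\Sigma_3$.
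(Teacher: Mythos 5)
Your proposal is correct, and its forward direction is the same as the paper's (multiply the three orthogonality relations \`a la Haagerup, expand, and convert $2+2\Re(s_i\overline{s_j})$ terms into $|\Sigma_i|^2$). The reverse direction, which is indeed the crux, shares the paper's skeleton -- establish $|\Sigma_1|,|\Sigma_2|\le 2$, invoke the Decomposition formula \eqref{s12} with an undetermined sign, and show that \eqref{C11} forces the third orthogonality relation for one choice of sign -- but your execution of both halves of that skeleton genuinely differs from the paper's. For the bounds, the paper splits according to the position of $\mathcal{H}$ relative to $-|\Psi|^2$, derives the quadratic \eqref{NEG} in $|\Psi|$, and writes down its roots explicitly in \eqref{ps1}, \eqref{ps2}, \eqref{ps3}, excluding the case $|\Sigma|,|\Delta|>2$ via \eqref{NI}; you instead work with the single real equation $\pm xyz=4-x^2-y^2-z^2$, getting the bound for free in the $+$ case and via a discriminant/Cauchy--Schwarz case analysis plus \eqref{NI} in the $-$ case, never needing the root formulas. (Your ``short case analysis'' does go through: a root $x>2$ forces either $y,z\le 2$, impossible by Cauchy--Schwarz, or $y,z\ge 2$ with equality somewhere, which makes $xyz>8$.) For the final verification, the paper substitutes each of \eqref{ps1}--\eqref{ps3} into \eqref{FIN} and asks the reader to check that some sign makes it an identity, a case-by-case computation it does not carry out; you replace this entirely by the two identities $\overline{R}\,W=\mathcal{H}/(xy)+xy/2$ and $|W|^2=z^2+\mathcal{H}+x^2y^2/4$, so that realness of $\mathcal{H}$ gives collinearity of $W$ with $R$, condition \eqref{C11} gives $|W|=2P$, and the sign $\epsilon$ is read off from $\mathrm{sign}(\overline{R}\,W)$. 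This is a uniform, case-free argument that moreover isolates exactly what the complex equation \eqref{C11} encodes (one direction constraint, one modulus constraint), and in that respect it is tighter than the paper's ending; what the paper's route buys in exchange is the explicit formulas \eqref{ps1}--\eqref{ps3} for $|\Psi|$, which it reuses immediately afterwards (e.g.\ in the remark that both signs in \eqref{ps3} occur for triplets of sixth roots of unity). Your treatment of the degenerate cases $\Sigma_1=0$ or $\Sigma_2=0$ matches the paper's.
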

\begin{proof}
First we start by proving that \eqref{C11} holds. To do this, we utilize Haagerup's idea \cite{haagerup} as follows: by pairwise orthogonality, we find that
\[1+a+b+e=-s_1-s_2\]
\[1+\overline{c}+\overline{d}+\overline{f}=-\overline{s}_3-\overline{s}_4\]
\[1+c\overline{a}+d\overline{b}+f\overline{e}=-s_3\overline{s}_1-s_4\overline{s}_2\]
Now, by multiplying these three equations together we find that \eqref{NI} follows and further
\[\mathcal{H}=-(s_1+s_2)(\overline{s}_3+\overline{s}_4)(s_3\overline{s}_1+s_4\overline{s}_2)=-(s_1\overline{s}_3+s_2\overline{s}_4+s_1\overline{s}_4+s_2\overline{s}_3)(s_3\overline{s}_1+s_4\overline{s}_2)=\]
\[-|s_1\overline{s}_3+s_2\overline{s}_4|^2-(s_1\overline{s}_4+s_2\overline{s}_3)(s_3\overline{s}_1+s_4\overline{s}_2)=-|s_1\overline{s}_3+s_2\overline{s}_4|^2-2\Re(s_1\overline{s}_2+s_3\overline{s}_4).\]
To conclude the proof, we need to show that
\[2\Re(s_1\overline{s}_2+s_3\overline{s}_4)=|1+a+b+e|^2+|1+c+d+f|^2-4\]
holds, however this follows from the Decomposition formula easily.

To see the converse direction, we need to show that \eqref{C11} essentially encodes orthogonality. Let us use the notations $\Sigma:=1+a+b+e$, $\Delta:=1+c+d+f$, $\Psi:=1+c\overline{a}+d\overline{b}+f\overline{e}$. With this notation condition \eqref{C11} boils down to
\beql{SH}
\mathcal{H}=\Sigma\overline{\Delta}\Psi=4-|\Sigma|^2-|\Delta|^2-|\Psi|^2.
\eeq
Clearly, if $|\Sigma|\leq 2$ and $|\Delta|\leq 2$ hold, then by the decomposition formula we can find $s_1, s_2$, $s_3$ and $s_4$ to ensure orthogonality to row $(1,1,1,1,1,1)$. Now observe, that the mutual orthogonality of rows $(1,a,b,e,s_1,s_2)$ and $(1,c,d,f,s_3,s_4)$ reads
\beql{ORT}
\Psi+s_3\overline{s}_1+s_4\overline{s}_2=0.
\eeq
Suppose first that we have the trivial case $\Sigma=\Delta=0$. Then, by the decomposition formula we have $s_2=-s_1$ and $s_4=-s_3$, and \eqref{SH} implies that $|\Psi|=2$. Therefore, if we set the unimodular number $s_3:=-\Psi s_1/2$ the orthogonality equation \eqref{ORT} is fulfilled.

Suppose secondly, that we have $\Delta=0$, but $\Sigma\neq 0$. Then we have $s_4=-s_3$, and from \eqref{SH} it follows that $|\Sigma|\leq 2$, and in particular
\beql{PSI}
|\Psi|=\sqrt{4-|\Sigma|^2}.
\eeq
Now we can use the Decomposition formula to find out the values of $s_1$ and $s_2$ and the orthogonality equation \eqref{ORT} becomes
\beql{ORT2}
\Psi+s_3\left(-2\mathbf{i}\frac{\overline{\Sigma}}{|\Sigma|}\sqrt{1-\frac{|\Sigma|^2}{4}}\right)=0.
\eeq
This holds, independently of $s_3$, if $|\Sigma|=2$, as by \eqref{PSI} $\Psi=0$ follows. Otherwise, set the unimodular number
\[s_3:=-\mathbf{i}\frac{\Sigma\Psi}{|\Sigma||\Psi|}\]
to ensure the orthogonality through \eqref{ORT2}.

Finally, let us suppose that $\Sigma\neq 0$ and $\Delta\neq 0$. Now observe that in this case the value of $\Psi$ needed for formula \eqref{ORT} can be calculated through \eqref{SH}. The other ingredient, namely the value of $s_3\overline{s}_1+s_4\overline{s}_2$ can be established through the Decomposition formula, once we derive the required bounds $|\Sigma|\leq 2$ and $|\Delta|\leq 2$. Depending on the value of $\mathcal{H}$, we treat several cases differently.

CASE 1: Suppose that $-|\Psi|^2\leq \mathcal{H}$. This implies, by formula \eqref{SH}, that $|\Sigma|^2+|\Delta|^2\leq 4$, and in particular $|\Sigma|\leq 2$, $|\Delta|\leq 2$ hold. Next we calculate $|\Psi|$ from \eqref{SH}.

Suppose first that $\mathcal{H}\geq 0$. Hence, after taking absolute values, \eqref{SH} becomes
\[|\Psi|^2+|\Sigma||\Delta||\Psi|+|\Sigma|^2+|\Delta|^2-4=0,\]
and the only non-negative root is
\beql{ps1}
|\Psi|=\frac{-|\Sigma||\Delta|+\sqrt{(4-|\Sigma|^2)(4-|\Delta|^2)}}{2}.
\eeq

Now suppose that $-|\Psi|^2\leq \mathcal{H}<0$. Hence, after taking absolute values, \eqref{SH} becomes
\beql{NEG}
|\Psi|^2-|\Sigma||\Delta||\Psi|+|\Sigma|^2+|\Delta|^2-4=0,
\eeq
and we find that the only non-negative root we have under the assumption $|\Sigma|^2+|\Delta|^2\leq 4$ reads
\beql{ps2}
|\Psi|=\frac{|\Sigma||\Delta|+\sqrt{(4-|\Sigma|^2)(4-|\Delta|^2)}}{2}.
\eeq

CASE 2: Suppose now that $\mathcal{H}<-|\Psi|^2$. This implies that $|\Sigma|^2+|\Delta|^2>4$, and we do not have a priori the bounds $|\Sigma|\leq 2$, $|\Delta|\leq 2$. Nevertheless, we derive equation \eqref{NEG} again, and we find that the values of $|\Psi|$ can be any of 
\beql{ps3}
|\Psi|_{1,2}=\frac{|\Sigma||\Delta|\pm\sqrt{(4-|\Sigma|^2)(4-|\Delta|^2)}}{2},
\eeq
provided that the roots are real, namely we have either $|\Sigma|>2$ and $|\Delta|>2$ or $|\Sigma|\leq 2$ and $|\Delta|\leq 2$. The first case is, however, not possible, as it would imply $|\Psi|>2$ contradicting the crucial condition \eqref{NI}.

Once we have established the bounds $|\Sigma|\leq 2$, $|\Delta|\leq 2$ and the value(s) of $|\Psi|$ has been found, we are free to use the Decomposition formula to obtain the values of $s_1$, $s_2$, $s_3$ and $s_4$. Clearly, we can set $s_1$ with the $+$ sign, while $s_2$ with the $-$ sign as in formula \eqref{s12}, up to equivalence. However, we do not know a priori how to distribute the signs amongst $s_3$ and $s_4$, and to simplify the notations we define
\beql{S34}
s_3=-\frac{\Delta}{2}\pm\mathbf{i}\frac{\Delta}{|\Delta|}\sqrt{1-\frac{|\Delta|^2}{4}},\ \ \ \ 
s_4=-\frac{\Delta}{2}\mp\mathbf{i}\frac{\Delta}{|\Delta|}\sqrt{1-\frac{|\Delta|^2}{4}}.
\eeq
In particular, by using \eqref{SH} and \eqref{S34} we find that the orthogonality equation \eqref{ORT} becomes
\beql{FIN}
\frac{4-|\Sigma|^2-|\Delta|^2-|\Psi|^2}{\Sigma\overline{\Delta}}+\frac{\overline{\Sigma}\Delta}{2}\pm2\frac{\overline{\Sigma}\Delta}{|\Sigma||\Delta|}\sqrt{1-\frac{|\Sigma|^2}{4}}\sqrt{1-\frac{|\Delta|^2}{4}}=0,
\eeq
where the $\pm$ sign agrees with the definition of $s_3$. To conclude the theorem plug in all of the possible values of $|\Psi|$ as described in \eqref{ps1}, \eqref{ps2}, \eqref{ps3} into \eqref{FIN} to verify that for some choice of the sign it holds identically.
\end{proof}
\begin{rem}
The two possible signs described by formula \eqref{ps3} can be realized. In particular, there are two different orthogonal triplet of rows composed of sixth roots of unity where $|\Sigma|=|\Delta|=\sqrt{3}$ in both cases, however, in one of the cases $|\Psi|=1$ while $|\Psi|=2$ in the other.
\end{rem}
\begin{corollary}[(Haagerup's trick, \cite{haagerup})]\label{HT}
Suppose that the rows $(1,1,1,1,1,1), (1,a,b,e,s_1,s_2)$ and $(1,c,d,f,s_3,s_4)$ composed of unimodular entries are mutually orthogonal. Then
\beql{HEQ}
\mathcal{H}(a,b,c,d,e,f)\in\mathbb{R}.
\eeq
\end{corollary}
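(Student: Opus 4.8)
The plan is to read this off directly from Theorem \ref{HTIO}, which does essentially all of the work. By hypothesis the three rows $(1,1,1,1,1,1)$, $(1,a,b,e,s_1,s_2)$ and $(1,c,d,f,s_3,s_4)$ are mutually orthogonal, so in particular unimodular numbers $s_1,s_2,s_3,s_4$ realizing this orthogonality exist. Hence the criterion of Theorem \ref{HTIO} applies in its forward direction, and identity \eqref{C11} holds:
\[
\mathcal{H}(a,b,c,d,e,f)=4-|1+a+b+e|^2-|1+c+d+f|^2-|1+c\overline{a}+d\overline{b}+f\overline{e}|^2.
\]
The right-hand side is the real number $4$ diminished by a sum of three squared moduli, each of which is a nonnegative real. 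Therefore $\mathcal{H}$ equals a real quantity, which is exactly the assertion \eqref{HEQ}.

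If one prefers a self-contained derivation that does not quote the full theorem, I would instead reuse only the first half of its proof: multiplying together the three pairwise orthogonality relations
\[
1+a+b+e=-s_1-s_2,\quad 1+\overline{c}+\overline{d}+\overline{f}=-\overline{s}_3-\overline{s}_4,\quad 1+c\overline{a}+d\overline{b}+f\overline{e}=-s_3\overline{s}_1-s_4\overline{s}_2
\]
yields $\mathcal{H}=-|s_1\overline{s}_3+s_2\overline{s}_4|^2-2\Re(s_1\overline{s}_2+s_3\overline{s}_4)$. Both summands on the right are manifestly real (a squared modulus and twice a real part), so $\mathcal{H}\in\mathbb{R}$ follows at once.

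Honestly, there is no real obstacle here: the statement is a genuine corollary, and the only ``step'' is recognizing that the already-established expression for $\mathcal{H}$ is real-valued on the nose. The one point worth stating explicitly is that we are using the orthogonality hypothesis in the direction that produces \eqref{C11}, so that we may conclude $\mathcal{H}$ is real without ever needing the converse construction of the $s_i$ or the case analysis that occupies the bulk of the proof of Theorem \ref{HTIO}.
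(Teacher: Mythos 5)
Your proposal is correct and matches the paper exactly: the paper states this as an immediate corollary of Theorem \ref{HTIO}, whose forward direction yields \eqref{C11} with a manifestly real right-hand side, which is precisely your main argument. Your alternative self-contained derivation is likewise just the first half of the paper's proof of Theorem \ref{HTIO}, so nothing differs in substance.
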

Haagerup used the property \eqref{HEQ} to give a complete characterization of complex Hadamard matrices of order $5$, or equivalently, describe the orthogonal maximal abelian $\ast$-subalgebras of the $5\times 5$ matrices \cite{haagerup}. Since then it was used in \cite{BN} and \cite{MSZ} to construct new, previously unknown complex Hadamard matrices of order $6$ as well. However, to guarantee the mutual orthogonality of three rows the necessary condition \eqref{HEQ} should be replaced by the more stronger identity \eqref{C11}. Nevertheless, \eqref{HEQ} will play an essential r\^ole in this paper too. These type of identities are extremely useful as they feature less variables than the standard orthogonality equations considerably simplifying the calculations required.

Solving the system of equations \eqref{C11}--\eqref{HEQ} is the key step to obtain the submatrices $B$ and $C$ of $G_6$. Once we have three orthogonal rows and columns we readily fill out the remaining lower right submatrix  $D$. This is explained by the following two lemmata, the first of which being a special case of a more general matrix inversion
\begin{lemma}\label{MIL}
If $U$ and $V$ are $n\times n$ matrices then \[\left(I_n+UV\right)^{-1}=I_n-U\left(I_n+VU\right)^{-1}V\]
provided that one of the matrices $I_n+UV$ or $I_n+VU$ is nonsingular.
\end{lemma}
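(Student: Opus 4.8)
The plan is to verify the identity by a single direct multiplication, after first dealing with the one genuinely subtle point: the right-hand side displays $(I_n+VU)^{-1}$, so before anything else I must argue that the stated hypothesis — that \emph{one} of $I_n+UV$ or $I_n+VU$ is nonsingular — already forces $I_n+VU$ to be invertible, so that the formula even makes sense.

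First I would establish that the two nonsingularity conditions are equivalent. This is Sylvester's determinant identity $\det(I_n+UV)=\det(I_n+VU)$; concretely, the nonzero eigenvalues of $UV$ and $VU$ coincide with multiplicities, so $-1$ is an eigenvalue of $UV$ precisely when it is an eigenvalue of $VU$. Consequently, if either $I_n+UV$ or $I_n+VU$ is nonsingular then both are, and in particular $(I_n+VU)^{-1}$ exists. This is exactly what is needed to make the claimed right-hand side well-defined under the hypothesis as stated.

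The remainder is a routine computation. Writing $W:=I_n-U(I_n+VU)^{-1}V$, I would expand the product $(I_n+UV)W=I_n+UV-U(I_n+VU)^{-1}V-UVU(I_n+VU)^{-1}V$. Factoring $U$ on the left and $(I_n+VU)^{-1}V$ on the right, the last two terms combine as $-U(I_n+VU)(I_n+VU)^{-1}V=-UV$, since $(I_n+VU)(I_n+VU)^{-1}=I_n$. Hence the $UV$ and $-UV$ contributions cancel and $(I_n+UV)W=I_n$. Because $I_n+UV$ and $W$ are square, a right inverse is automatically a two-sided inverse, so $W=(I_n+UV)^{-1}$, which is the asserted formula.

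There is essentially no hard step: the algebraic cancellation above is forced. The only place demanding care is the well-definedness of the right-hand side, i.e.\ confirming $I_n+VU$ is invertible when only one of the two matrices is assumed nonsingular, and the equivalence of the two conditions supplied by the determinant identity settles precisely this.
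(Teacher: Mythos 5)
Your proof is correct and follows essentially the same route as the paper: the identical one-line verification $(I_n+UV)\bigl(I_n-U(I_n+VU)^{-1}V\bigr)=I_n+UV-U(I_n+VU)(I_n+VU)^{-1}V=I_n$. The only difference is in the preliminary step: where you invoke Sylvester's determinant identity to show the two nonsingularity hypotheses are equivalent, the paper simply says ``by symmetry, we can suppose that $I_n+VU$ is nonsingular,'' leaving that equivalence implicit (it also follows, as your argument shows, from the fact that the displayed computation produces a right inverse of $I_n+UV$, so one may run it with the roles of $U$ and $V$ swapped).
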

\begin{proof}
By symmetry, we can suppose that the matrix $I_n+VU$ is nonsingular. Then, we have
\[(I_n+UV)(I_n-U(I_n+VU)^{-1}V)=I_n+UV-U(I_n+VU)(I_n+VU)^{-1}V=I_n.\]
\end{proof}
\begin{lemma}\label{DDD}
Suppose that we have a $6\times 6$ partial complex Hadamard matrix consisting of three orthogonal rows and columns, containing no vanishing $3\times 3$ minor. Then there is a unique way to construct a unitary matrix containing these rows and columns as a submatrix.
\end{lemma}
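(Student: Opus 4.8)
The plan is to reduce the completion problem to a single block-matrix equation and then verify, using the matrix inversion Lemma~\ref{MIL}, that the forced solution automatically satisfies all the remaining orthogonality relations. After permuting rows and columns (which preserves both unitarity and the non-vanishing-minor hypothesis) I may assume the three orthogonal rows and columns are the first three of each, so the partial matrix takes the block form
\[
G=\begin{bmatrix} E & B\\ C & D\end{bmatrix},
\]
where $E,B,C$ are the known $3\times 3$ blocks and $D$ is the unknown lower right block; building a unitary matrix means (after the scaling $G\mapsto G/\sqrt 6$) finding $D$ with $GG^\ast=6I_6=G^\ast G$. Expanding these two identities blockwise produces six equations. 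Two of them,
\[
EE^\ast+BB^\ast=6I_3,\qquad E^\ast E+C^\ast C=6I_3,
\]
are exactly the hypotheses that rows $1,2,3$ and columns $1,2,3$ are orthogonal, so they hold for free. The remaining four,
\[
EC^\ast+BD^\ast=0,\quad E^\ast B+C^\ast D=0,\quad CC^\ast+DD^\ast=6I_3,\quad B^\ast B+D^\ast D=6I_3,
\]
must be arranged by the choice of $D$.

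For uniqueness I would invoke the hypothesis that no $3\times 3$ minor vanishes, so in particular $B$ and $C$ are nonsingular. The first cross equation $EC^\ast+BD^\ast=0$ then has the unique solution $D^\ast=-B^{-1}EC^\ast$, i.e.
\[
D=-CE^\ast (B^\ast)^{-1},
\]
which settles uniqueness at once. The real content is existence: I must check that this forced $D$ satisfies the other three equations. The second cross equation $E^\ast B+C^\ast D=0$ reduces, after substituting $D$ and using $C^\ast C=6I_3-E^\ast E$ together with $BB^\ast=6I_3-EE^\ast$, to the push-through identity $(6I_3-E^\ast E)E^\ast=E^\ast(6I_3-EE^\ast)$, which is immediate. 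For the diagonal equation $CC^\ast+DD^\ast=6I_3$, substituting $D$ and $DD^\ast=CE^\ast(BB^\ast)^{-1}EC^\ast=CE^\ast(6I_3-EE^\ast)^{-1}EC^\ast$ turns it into
\[
C\bigl[I_3+E^\ast(6I_3-EE^\ast)^{-1}E\bigr]C^\ast=6I_3 .
\]

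The crux — and the place where Lemma~\ref{MIL} enters — is the evaluation of the bracket. Applying Lemma~\ref{MIL} with $U=E^\ast$ and $V=-\tfrac16 E$ gives
\[
I_3+E^\ast(6I_3-EE^\ast)^{-1}E=6(6I_3-E^\ast E)^{-1}=6(C^\ast C)^{-1},
\]
so the left side collapses to $6\,C(C^\ast C)^{-1}C^\ast=6I_3$ by the invertibility of $C$. The last equation $B^\ast B+D^\ast D=6I_3$ then follows by the symmetric computation with the roles of $B$ and $C$ interchanged. I expect the main obstacle to be exactly this existence step: a priori $D$ is pinned down by one single relation, and it is not obvious that it should fulfil the others. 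The mechanism that forces it to do so is precisely the conversion between $(6I_3-EE^\ast)^{-1}$ and $(6I_3-E^\ast E)^{-1}$ supplied by Lemma~\ref{MIL}, while the non-vanishing minor hypothesis is what guarantees that every inverse appearing above actually exists.
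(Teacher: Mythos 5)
Your proposal is correct and follows essentially the same route as the paper: you force $D=-CE^\ast(B^\ast)^{-1}$ from the cross-orthogonality equation using the invertibility of $B$, then verify the remaining relations via Lemma~\ref{MIL} and the column hypothesis $E^\ast E+C^\ast C=6I_3$, collapsing $C(C^\ast C)^{-1}C^\ast$ to $I_3$ by the invertibility of $C$. The only difference is cosmetic: the paper verifies only $CC^\ast+DD^\ast=6I_3$ (since $GG^\ast=6I_6$ for a square matrix already implies $G^\ast G=6I_6$), whereas you check all four remaining block equations explicitly, which is redundant but harmless.
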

\begin{proof}
Let $U$ be a $6\times 6$ matrix with $3\times 3$ blocks $A,B,C$ and $D$, as the following:
\[U=\left[\begin{array}{cc}
A & B \\
C & D\\
\end{array}\right].\]
By the orthogonality of the first three rows and columns and using the fact that the entries are unimodular, we have
\beql{28}
AA^\ast+BB^\ast=6I_3
\eeq
\beql{29}
A^\ast A+C^\ast C=6I_3
\eeq
To ensure orthogonality in-between the first three and the last three rows we need to have $AC^\ast+BD^\ast=O_3$, the all $0$ matrix. As $B$ is nonsingular by our assumptions we can define
\beql{DD}
D:=-CA^\ast(B^{-1})^\ast.
\eeq
Now we need to show that the last three rows are mutually orthogonal as well. Indeed, by using \eqref{DD} and \eqref{28} we have
\[CC^\ast+DD^\ast=C\left(I_3+A^\ast(BB^\ast)^{-1}A\right)C^\ast=C\left(I_3+A^\ast\left(6I_3-AA^\ast\right)^{-1}A\right)C^\ast,\]
which, by Lemma \ref{MIL} and \eqref{29} is
\[C\left(I_3+\frac{1}{6}A^\ast\left(I_3-\frac{1}{6}AA^\ast\right)^{-1}A\right)C^\ast=C\left(I_3-\frac{1}{6}A^\ast A\right)^{-1}C^\ast=6C\left(C^\ast C\right)^{-1}C^\ast=6I_3.\]
\end{proof}
We do not state that the obtained unitary matrix $U$ is Hadamard, which is not true in general. Recall that our goal is to embed the submatrix $E$ into the matrix $G_6$ (cf.\ \eqref{E}-\eqref{G6}). We have the following trivial
\begin{lemma}
Suppose that a submatrix $E$ can be embedded into a complex Hadamard matrix $G_6$ of order $6$ in which the upper right submatrix $B$ is invertible. Then for some unimodular submatrix $C$ for which the first three columns of $G_6$ are orthogonal the lower right submatrix $D=-CE^\ast(B^{-1})^\ast$ is unimodular.
\end{lemma}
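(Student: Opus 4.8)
The plan is to read off the required blocks directly from the assumed embedding and to verify that they satisfy the asserted formula; essentially everything is inherited from the ambient matrix $G_6$. Write the complex Hadamard matrix witnessing the embedding in the block form
\[
G_6=\left[\begin{array}{cc} E & B \\ C & D \end{array}\right],
\]
where $C$ and $D$ denote the lower-left and lower-right $3\times 3$ blocks supplied by the embedding. Since $G_6$ is a complex Hadamard matrix, all of its entries are unimodular and all of its columns are pairwise orthogonal; in particular $C$ is a unimodular block, and the first three columns of $G_6$ are mutually orthogonal. Thus this $C$ already witnesses the hypothesis on the lower-left block, and it remains only to identify $D$.

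To this end I would invoke the orthogonality of the first three rows of $G_6$ against the last three. Expressed in block form this reads $EC^\ast+BD^\ast=O_3$, exactly as in the proof of Lemma \ref{DDD}. Since $B$ is invertible by hypothesis, we may solve this relation for $D^\ast$, obtaining $D^\ast=-B^{-1}EC^\ast$, and hence, after taking the Hermitean transpose, $D=-CE^\ast(B^{-1})^\ast$. This coincides with formula \eqref{DD} upon replacing $A$ by $E$. The unimodularity of this $D$ is then immediate, since $D$ is literally the lower-right block of the Hadamard matrix $G_6$, whose entries are all of modulus $1$.

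There is no real obstacle here, which is why the statement is labelled trivial: the only genuine observation is that the formula for $D$ is \emph{forced}. It is not an independent construction but the unique solution of the inter-row orthogonality equation $EC^\ast+BD^\ast=O_3$ once $B$ is invertible, so that the block guaranteed to be unimodular by $G_6$ being Hadamard is exactly the one produced by the displayed formula. In this sense the lemma merely records that the matrix-inversion construction of Lemma \ref{DDD}, applied to the data $(E,B,C)$ arising from a genuine embedding, reproduces the correct, unimodular block $D$.
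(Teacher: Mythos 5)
Your proof is correct and is essentially the paper's own argument: the paper dismisses the lemma with ``this is exactly what embedding means,'' and your write-up merely makes explicit what that entails, namely that the blocks $C$ and $D$ of the ambient Hadamard matrix $G_6$ are unimodular, the first three columns are orthogonal, and the inter-row orthogonality $EC^\ast+BD^\ast=O_3$ together with the invertibility of $B$ forces $D=-CE^\ast(B^{-1})^\ast$ as in formula \eqref{DD}. Nothing is missing; you have simply spelled out the one-line proof in full.
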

\begin{proof}
Indeed, this is exactly what embedding means.
\end{proof}
In particular, if the submatrices $B$ and $C$ are chosen carefully, the unimodular property of $D$ follows for free.
\begin{corollary}\label{embC}
Start from a submatrix $E$ and suppose that there are only finitely many (invertible) candidate matrices $B\in SOL_B$ and $C\in SOL_C$ such that the first three rows and columns of the matrix $G_6$ are orthogonal. Then $E$ can be embedded into a complex Hadamard matrix of order $6$ if and only if there is some $B\in SOL_B$ and $C\in SOL_C$ such that the matrix $D=-CE^\ast(B^{-1})^\ast$ is unimodular.
\end{corollary}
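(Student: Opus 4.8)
The plan is to establish the two implications separately, reducing each to machinery already in place: the uniqueness of the unitary completion furnished by Lemma \ref{DDD} for the sufficiency direction, and the immediately preceding (trivial) lemma for the necessity direction. The observation that ties everything together is that, by \eqref{28}--\eqref{29}, any matrix $U$ assembled from three mutually orthogonal rows and columns as in Lemma \ref{DDD} automatically satisfies $UU^\ast=6I_6$; consequently such a $U$ is a complex Hadamard matrix if and only if every one of its entries is unimodular. Since the entries of the blocks $E$, $B$ and $C$ are unimodular by construction, the Hadamard property of the completion hinges solely on the unimodularity of the remaining block $D$.

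For the sufficiency direction I would argue as follows. Assume we are given $B\in SOL_B$ and $C\in SOL_C$ for which $D=-CE^\ast(B^{-1})^\ast$ is unimodular. Because $B$ and $C$ are invertible as members of $SOL_B$ and $SOL_C$, the nonsingularity hypotheses of Lemma \ref{DDD} are met, so the lemma applies and produces a unique unitary completion whose lower right block is, by \eqref{DD}, precisely $-CE^\ast(B^{-1})^\ast=D$. As noted above, all four blocks of this completion are then unimodular, so the resulting $6\times 6$ matrix with $UU^\ast=6I_6$ is in fact a complex Hadamard matrix; by construction it carries $E$ in its upper left corner, exhibiting the desired embedding.

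For the necessity direction I would start from an embedding of $E$ into a complex Hadamard matrix $G_6$ realized through an invertible upper right block $B$. That block lies in $SOL_B$ (it is invertible and makes the first three rows orthogonal), and the block $C$ of the same $G_6$ lies in $SOL_C$ (it makes the first three columns orthogonal). Applying the immediately preceding lemma --- equivalently, invoking the uniqueness clause of Lemma \ref{DDD} together with \eqref{DD} --- identifies the lower right block of $G_6$ as $D=-CE^\ast(B^{-1})^\ast$, and since $G_6$ is Hadamard this $D$ is unimodular, yielding the required pair $(B,C)$.

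The main obstacle, and the point demanding care, is the invertibility hypothesis threaded through the statement: $SOL_B$ (and $SOL_C$) is declared to consist of invertible candidates, and both the backward appeal to Lemma \ref{DDD} and the forward appeal to the preceding lemma genuinely require a nonsingular $B$ for $B^{-1}$ and the no-vanishing-minor condition to make sense. In the necessity direction one must therefore ensure that an embedding can be taken with $B$ invertible rather than degenerate; the finiteness hypothesis on $SOL_B$ and $SOL_C$ is exactly what legitimizes restricting attention to this generic situation and makes the test ``$D=-CE^\ast(B^{-1})^\ast$ is unimodular for some admissible pair'' a finite, decidable check. Beyond this the remaining work is bookkeeping: matching the explicit $D$ of \eqref{DD} against the formula in the statement and confirming that unimodularity of $D$ together with $UU^\ast=6I_6$ upgrades the unitary completion to a Hadamard matrix.
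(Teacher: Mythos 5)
Your proof is correct and follows essentially the same route the paper intends: the paper states this corollary without a separate proof precisely because the backward implication is Lemma \ref{DDD} (the unique unitary completion via \eqref{DD}, which becomes Hadamard once $D$ is unimodular) and the forward implication is the trivial embedding lemma immediately preceding the corollary. One small correction: your appeal to the finiteness hypothesis to justify restricting to embeddings with invertible $B$ in the necessity direction is misplaced --- finiteness only makes the test a decidable, finite check; the paper's actual tool for dismissing singular blocks is Lemma \ref{Vanish} (a vanishing $3\times 3$ minor forces the matrix into $K_6^{(3)}$), which is exactly why the trivial lemma carries invertibility of $B$ as an explicit hypothesis.
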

Note that due to the finiteness condition in Corollary \ref{embC} once we have all (but finitely many) candidate matrices $B$ and $C$ we can decide algorithmically whether the submatrix $E$ can be embedded into a complex Hadamard matrix.

The next step is to characterize $6\times 6$ complex Hadamard matrices with vanishing $3\times 3$ minors. To do this we need two auxiliary results first.
\begin{lemma}\label{AUX1}
Suppose that in a dephased $6\times 6$ complex Hadamard matrix there exist a noninitial row (or column) containing three identical entries $x$. Then $x=\pm 1$ and this row (or column) reads $(1,1,1,-1,-1,-1)$, up to permutations.
\end{lemma}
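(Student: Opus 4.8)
The plan is to reduce the statement to a single row and then use just one orthogonality relation together with the equality case of the triangle inequality. Since the transpose of a dephased complex Hadamard matrix is again a dephased complex Hadamard matrix, the column case follows from the row case by transposition, so I would only treat a noninitial row $v=(v_1,\dots,v_6)$ with $|v_i|=1$. Dephasing forces $v_1=1$, and orthogonality of $v$ to the all-ones first row yields the single scalar identity $\sum_{i=1}^6 v_i=0$. No further Hadamard structure is needed: the lemma is really a statement about a unimodular vector whose first coordinate is $1$, whose coordinates sum to zero, and three of whose coordinates coincide.

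I would write the six entries as the multiset $\{x,x,x,w_1,w_2,w_3\}$, where the three copies of $x$ are the asserted identical entries, and note that exactly one of these six values is the first coordinate $1$. The argument then splits into two cases according to whether $x=1$.

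In the case $x=1$, the sum condition becomes $w_1+w_2+w_3=-3$; taking real parts and using $\Re(w_j)\ge -1$ for each unimodular $w_j$ forces $\Re(w_j)=-1$, hence $w_j=-1$ for all $j$, and the row is $(1,1,1,-1,-1,-1)$ up to permutation. In the case $x\ne 1$, the coordinate equal to $1$ must be one of the $w_j$, say $w_1=1$, and the sum condition reads $w_2+w_3=-1-3x$. Here I would invoke the triangle inequality $|w_2+w_3|\le 2$, which gives $|1+3x|^2=10+6\Re(x)\le 4$, forcing $\Re(x)\le -1$ and hence $x=-1$; substituting back gives $w_2+w_3=2$, and the equality case of the triangle inequality for two unimodular numbers forces $w_2=w_3=1$, so the row is again $(1,1,1,-1,-1,-1)$ up to permutation. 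In both cases $x=\pm 1$ and the asserted form holds.

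I do not anticipate a genuine obstacle, as the argument is elementary and relies only on orthogonality to the all-ones row and the equality conditions in the triangle inequality. The only points that need care are bookkeeping: accounting for whether the distinguished entry $1$ coming from dephasing is or is not among the three coinciding entries, and correctly extracting the equality cases $w_j=-1$ from $\sum_j w_j=-3$ and $w_2=w_3=1$ from $w_2+w_3=2$. As a consistency check, the conclusion also rules out a noninitial row with four or more equal entries, which agrees with the fact that the forced row $(1,1,1,-1,-1,-1)$ has exactly three entries of each value.
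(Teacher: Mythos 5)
Your proof is correct and follows essentially the same route as the paper: both rest on orthogonality to the all-ones row, the computation $|1+3x|^2=10+6\Re(x)$ compared against the bound $4$ (the paper cites its Lemma~\ref{L1}, you invoke the triangle inequality directly), and an equality-case analysis to pin down the remaining entries. If anything, your case split on $x=1$ versus $x\neq 1$ is more careful in its bookkeeping than the paper's terse contradiction argument, which glosses over the subcase of four coinciding entries equal to $1$.
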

\begin{proof}
Suppose to the contrary, that there are three nonreal numbers $x$ in a row (column). Then the sum of these numbers $x$ together with the leading $1$ read $|1+3x|^2=10+6\Re(x)>4$, and hence, by Lemma \ref{L1} this row (column) cannot be orthogonal to the first row (column), a contradiction. To ensure orthogonality to the first row (column) we should specify the remaining three entries to $-x$ and hence the last part of the statement follows.
\end{proof}
Recall that the core of a dephased complex Hadamard matrix of order $n$ is its lower right $(n-1)\times (n-1)$ submatrix. A vanishing sum of order $k$ is a $k$-term sum adding up to $0$. The following breakthrough result was obtained very recently.
\begin{theorem}[(Karlsson, \cite{K1,K2})]\label{K6}
Let $H$ be a dephased complex Hadamard matrix of order $6$. Then the following are equivalent:
\begin{enumerate}[(a)]
\item $H$ belongs to the three-parameter degenerate family $K_6^{(3)}$;
\item $H$ is $H_2$-reducible;
\item The core of $H$ contains a $-1$;
\item Some row or column of $H$ contains a vanishing sum of order $2$;
\item Some row or column of $H$ contains a vanishing sum of order $4$.
\end{enumerate}
\end{theorem}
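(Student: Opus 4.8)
The plan is to verify the single directed cycle $(a)\Rightarrow(b)\Rightarrow(c)\Rightarrow(d)\Rightarrow(e)\Rightarrow(a)$; since one can travel from any condition to any other along it, this yields all the equivalences at once. Two of the links are almost formal. The equivalence of $(a)$ and $(b)$ is built into the definition of $K_6^{(3)}$: the family is produced by the Dita-type construction that grafts two $3\times 3$ cores onto $H_2$, so a matrix lies in $K_6^{(3)}$ exactly when it is equivalent to a block matrix of the shape $\left[\begin{smallmatrix} K & E\\ K & -E\end{smallmatrix}\right]$, which is precisely what $H_2$-reducibility asserts. Thus $(a)\Rightarrow(b)$ is immediate, and $(b)\Rightarrow(a)$ is the routine parametrisation of this block form, each $3\times 3$ core being $F_3$ up to equivalence, with the residual freedom accounting for the three parameters.

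The trio $(c)$, $(d)$, $(e)$ interrelates by elementary manipulation of the six-term vanishing sums carried by the rows and columns. Every noninitial row and column is orthogonal to the all-ones first row (resp.\ column) and hence sums to $0$, while the first row and column consist of $1$'s and so carry no vanishing subsum. For $(c)\Rightarrow(d)$, a core entry equal to $-1$ at position $(i,j)$ pairs with the dephased entry $h_{1j}=1$ to give the two-term vanishing sum $1+(-1)=0$ in column $j$. For $(d)\Rightarrow(e)$ one deletes a two-term vanishing sum from the six-term zero sum of its (necessarily noninitial) row or column, leaving a complementary four-term vanishing sum; running the complementation backwards gives $(e)\Rightarrow(d)$, so in fact $(d)\Leftrightarrow(e)$.

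For $(b)\Rightarrow(c)$ I would start from the reducible block form and dephase it by scaling columns so that the first row becomes all $1$'s. The two rows sharing the common left block $K$ then differ only by the sign on their right halves, so the partner of the first row is carried to $(1,1,1,-1,-1,-1)$ up to the column permutation; this is exactly the configuration of Lemma \ref{AUX1}, and it plants three $-1$'s in the core. Scaling rows to dephase the first column fixes this partner row, whose leading entry is already $1$, so a genuine $-1$ survives in the core.

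The real work is the remaining arrow $(e)\Rightarrow(a)$, where a single local coincidence must be promoted to the global block structure. I would first use the reverse complementation to replace the four-term vanishing sum by a two-term one, and normalise it by an equivalence to expose an honest $2\times 2$ copy of $H_2$ inside the matrix. The crux is then to show that this isolated $H_2$ forces the whole matrix to reduce: one feeds the three rows and columns meeting the distinguished $-1$ into the orthogonality identities of Corollary \ref{HT} and Theorem \ref{HTIO}, and argues that the resulting constraints split the remaining $3\times 3$ blocks into the pattern $\left[\begin{smallmatrix} K & E\\ K & -E\end{smallmatrix}\right]$, i.e.\ yield $(b)$ and hence $(a)$. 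I expect this propagation of a local $H_2$ to a global one to be the main obstacle: it is precisely the point at which the orthogonality system must be solved rather than merely rearranged, and where the heavy elimination over the sextic relations becomes unavoidable. Once it is in place the cycle closes and the theorem follows.
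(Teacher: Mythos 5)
You should know at the outset that the paper does not prove this statement at all: it is Karlsson's theorem, imported from \cite{K1,K2} and stated without proof. So your attempt cannot be measured against a proof in the paper; it has to stand alone as a reconstruction of Karlsson's results, and as it stands it does not.

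There are two genuine gaps. The first is definitional and undermines three of your five edges: you identify $H_2$-reducibility, and with it $K_6^{(3)}$, with equivalence to the block form $\left[\begin{smallmatrix} K & E\\ K & -E\end{smallmatrix}\right]$ built from $3\times 3$ blocks. That is the Di\c{t}\u{a}-type construction, which is only a proper subfamily of $K_6^{(3)}$. As the paper itself explains immediately after the theorem, $H_2$-reducibility means the matrix has a canonical form in which all \emph{nine} of its $2\times 2$ submatrices are complex Hadamard --- a $3\times 3$ array of $2\times 2$ Hadamard blocks, the opposite block pattern to yours. Under your reading the theorem would simply be false: a generic member of $K_6^{(3)}$ has a $-1$ in its core yet is not equivalent to the Di\c{t}\u{a} form. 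Consequently your $(a)\Leftrightarrow(b)$ is not ``built into the definition,'' and your $(b)\Rightarrow(c)$ argues from the wrong normal form (with the correct definition that step is trivial: dephasing places $H_2$ in the upper-left corner and its $(2,2)$ entry $-1$ lies in the core). The second gap is that the only edge carrying real mathematical weight, your $(e)\Rightarrow(a)$ --- promoting a single vanishing $2$-sum to membership in a three-parameter family --- is announced rather than proven. Feeding the relevant rows into Corollary \ref{HT} and Theorem \ref{HTIO} and expecting the constraints to ``split the remaining blocks'' is a hope, not an argument, and the block pattern you aim to split into is the incorrect one anyway. That propagation of a local $H_2$ to the global structure is precisely the content of Karlsson's two papers, carried out there by explicitly parametrizing the nine $2\times 2$ blocks and the M\"obius-type relations among them; nothing in your sketch, or in the present paper, substitutes for it. What does survive of your proposal: $(c)\Rightarrow(d)$ and the complementation equivalence $(d)\Leftrightarrow(e)$ (every noninitial row and column sums to zero, and the all-ones first row and column carry no vanishing subsum) are correct and are indeed the easy part of the theorem.
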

In particular, Theorem \ref{K6} gives a characterization of complex Hadamard matrices of order $6$ containing $F_2$ as a submatrix. The term $H_2$-reducibility refers to the beautiful structure of these matrices: they have a canonical form in which all $9$ of their $2\times 2$ submatrices are complex Hadamard. Part (c) and (d) of Theorem \ref{K6} allow us to quickly recognize if a matrix belongs to the family $K_6^{(3)}$, and we shall heavily use these conditions through our paper. Part (e) tells us that once we have four entries in a row or column of a matrix which lies outside the family $K_6^{(3)}$ the Decomposition formula readily derives the unique remaining two values through \eqref{s12}. As the family $K_6^{(3)}$ forms a three-parameter subset, the matrices it contains are atypical, hence the adjective ``degenerate''.
\begin{corollary}\label{C213}
Suppose that in a dephased $6\times 6$ complex Hadamard matrix $H$ there exist a noninitial row (or column) containing three identical entries. Then $H$ belongs to the family $K_6^{(3)}$.
\end{corollary}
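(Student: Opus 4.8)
The plan is to reduce the statement directly to the two results just established, namely Lemma \ref{AUX1} and Theorem \ref{K6}. The argument is a short chain of implications, so I expect no serious obstacle; essentially the whole content has already been packaged into those two lemmata, and the only point requiring care is to check that the hypothesis triggers exactly one of the equivalent conditions of Theorem \ref{K6}.

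First I would apply Lemma \ref{AUX1} to the noninitial row (or column) that contains three identical entries. Since $H$ is a dephased complex Hadamard matrix of order $6$, the hypotheses of that lemma are met, and it tells us that the repeated entry must be $\pm 1$ and, up to permutation of its coordinates, the row (or column) reads $(1,1,1,-1,-1,-1)$. In particular it contains both the value $1$ and the value $-1$.

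Next I would observe that such a row (or column) automatically contains a vanishing sum of order $2$: selecting one entry equal to $1$ and one entry equal to $-1$ yields $1+(-1)=0$. This is precisely condition (d) of Theorem \ref{K6}. Invoking the equivalence supplied by that theorem, condition (d) implies condition (a), i.e.\ $H$ belongs to the three-parameter family $K_6^{(3)}$, which is exactly the desired conclusion.

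The hardest part here is really just bookkeeping: confirming that Lemma \ref{AUX1} applies (guaranteed since $H$ is dephased of order $6$) and that the normalized form $(1,1,1,-1,-1,-1)$ genuinely exhibits a two-term vanishing sum. Once these two trivial checks are made, the corollary follows immediately from the pipeline Lemma \ref{AUX1} $\Rightarrow$ condition (d) of Theorem \ref{K6} $\Rightarrow$ condition (a) of Theorem \ref{K6}.
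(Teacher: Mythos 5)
Your proof is correct and follows exactly the route the paper intends: the corollary is stated without proof precisely because it is the immediate consequence of Lemma \ref{AUX1} (forcing the row into the form $(1,1,1,-1,-1,-1)$) combined with the equivalence (d) $\Leftrightarrow$ (a) of Theorem \ref{K6}, since $1+(-1)=0$ is a vanishing sum of order $2$. Nothing is missing; one could equally well invoke condition (c) (the core contains a $-1$), but this is the same pipeline.
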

\begin{lemma}\label{Vanish}
Suppose that a $6\times 6$ complex Hadamard matrix $H$ has a vanishing $3\times 3$ minor. Then $H$ belongs to the family $K_6^{(3)}$.
\end{lemma}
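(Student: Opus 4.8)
The plan is to reduce the statement to Corollary \ref{C213}: I will show that a vanishing $3\times3$ minor forces, after a suitable equivalence, a noninitial row or column that contains three identical entries.

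First I would normalize the minor. Suppose it sits in rows $R=\{i_1,i_2,i_3\}$ and columns $S=\{j_1,j_2,j_3\}$ of $H$. Multiplying these three rows and three columns of $H$ by appropriate unimodular phases is an equivalence operation; it preserves the Hadamard property and, since it only rescales the minor by a nonzero factor, preserves its vanishing. Hence I may assume the corresponding $3\times3$ submatrix is dephased,
\[
M=\left[\begin{array}{ccc}1&1&1\\ 1&a&b\\ 1&c&d\end{array}\right],\qquad \det M=(a-1)(d-1)-(b-1)(c-1)=0,
\]
with $a,b,c,d$ unimodular.

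The key step, and the only genuinely clever point, is the classification of the unimodular solutions of $\det M=0$. Taking complex conjugates and using $\overline{z-1}=-(z-1)/z$ for $|z|=1$, the relation $(a-1)(d-1)=(b-1)(c-1)$ produces the companion relation $(a-1)(d-1)/(ad)=(b-1)(c-1)/(bc)$. If none of $a,b,c,d$ equals $1$ the common value $(a-1)(d-1)$ is nonzero, so dividing the two relations gives $ad=bc$, and substituting back into $\det M=0$ gives $a+d=b+c$. Thus $\{a,d\}$ and $\{b,c\}$ are the roots of one and the same monic quadratic, whence $\{a,d\}=\{b,c\}$ as multisets: either $a=b,\,d=c$, which makes the last two columns of $M$ identical, or $a=c,\,d=b$, which makes the last two rows of $M$ identical. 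The degenerate subcases in which some entry equals $1$ are checked directly ($a=1$ forces $b=1$ or $c=1$, etc.) and likewise yield a row or column of $M$ equal to $(1,1,1)$, i.e.\ again two identical lines. I expect this classification to be the main obstacle, though the conjugation trick producing $ad=bc$ is precisely what makes it painless.

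It remains to convert ``two identical lines of $M$'' into the hypothesis of Corollary \ref{C213}. Say two rows of $M$ agree, i.e.\ in the rescaled $H$ there are rows $i_p,i_q$ with $H_{i_p,j}=H_{i_q,j}$ for all $j\in S$, so their entrywise ratio equals $1$ on $S$. Orthogonality of these two full rows gives $\sum_{j\in S^c}H_{i_p,j}\overline{H_{i_q,j}}=-3$; as the three summands are unimodular, this is the equality case of the triangle inequality, forcing the ratio to equal $-1$ on all of $S^c$. Dividing each column $j$ by $H_{i_q,j}$ then turns row $i_q$ into $(1,\dots,1)$ and row $i_p$ into a $\pm1$ vector with three entries equal to $+1$ and three equal to $-1$. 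Placing row $i_q$ first and clearing the first column re-dephases $H$ while leaving $i_p$ a noninitial row that still carries three identical entries, so Corollary \ref{C213} applies. The case of two identical columns is entirely symmetric, using column orthogonality. Since $K_6^{(3)}$-membership is an equivalence invariant, the original $H$ belongs to $K_6^{(3)}$, as claimed.
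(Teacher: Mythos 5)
Your proof is correct and follows essentially the same route as the paper's: normalize so the vanishing $3\times 3$ minor is dephased, show that unimodularity together with $\det M=0$ forces two identical rows (or columns) in the minor, and then rephase so that Corollary \ref{C213} gives membership in $K_6^{(3)}$. The differences are only in execution --- the paper solves $\det E=0$ for $d$ and imposes $d\overline{d}=1$ where you use the conjugation trick to obtain $ad=bc$ and $a+d=b+c$, and your triangle-inequality argument in the final rephasing step is a correct, more detailed version of what the paper compresses into the phrase ``after enphasing the matrix.''
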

\begin{proof}
Suppose that $H$ has a vanishing $3\times 3$ minor, say the upper left submatrix $E(a,b,c,d)$, as in formula \eqref{E}. Such an assumption can be made, up to equivalence. As $\mathrm{det}(E)=b+c-a-d+ad-bc=0$, we find that if any of the indeterminates $a,b,c,d$ is equal to $1$ then $E$ contains a noninitial row (or column) containing three $1$s, and therefore by Corollary \ref{C213} we conclude that this matrix belongs to the family $K_6^{(3)}$. Otherwise, we can suppose that none of $a,b,c,d$ is equal to $1$ and hence we find that $d=(a+b c-b-c)/(a-1)$, which should be of modulus one. To ensure this solve the equation $d\overline{d}-1=0$ to find that either $b=a$ or $c=a$ should hold, but then we have either $d=c$ or $d=b$ as well. In particular, we find that $E$ has two identical rows (or columns). After enphasing the matrix, again, we find that there is a full column (or row) of entries $1$ and a reference to Corollary \ref{C213} concludes the lemma.
\end{proof}
Therefore to investigate those matrices which lie outside the family $K_6^{(3)}$ we can safely use Lemma \ref{DDD} and in particular the inversion formula \eqref{DD}.

It turns out, that the isolated matrix $S_6^{(0)}$ (cf.\ \cite{karol}) requires a special treatment as well. It is featured in the following
\begin{lemma}\label{cubic}
Suppose that in a $6\times 6$ dephased complex Hadamard matrix $H$ there is a noninitial row and column composed of cubic roots of unity. Then $H$ is either equivalent to $S_6^{(0)}$ or belongs to the family $K_6^{(3)}$.
\end{lemma}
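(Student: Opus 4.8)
The plan is to first pin down the combinatorial shape of the two cube-root lines, then to normalize by equivalence, and finally to reduce everything to a finite elimination whose only nondegenerate solution is $S_6^{(0)}$.

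First I would determine the multiset of entries of the special row. Writing $\omega=e^{2\pi\mathbf{i}/3}$, a noninitial cube-root row is dephased (its first entry is $1$) and orthogonal to $(1,1,1,1,1,1)$, so its six entries sum to $0$. If $1$ occurs $n_1$ times and $\omega,\omega^2$ occur $n_\omega,n_{\omega^2}$ times, then $n_1+n_\omega+n_{\omega^2}=6$ and $n_1+n_\omega\omega+n_{\omega^2}\omega^2=0$; comparing real and imaginary parts forces $n_1=n_\omega=n_{\omega^2}=2$. Thus the special row, and by the same argument the special column, contains exactly two copies of each cube root. Permuting the rows and columns indexed by $2,\dots,6$ I would move the special row to the second row and the special column to the second column; permuting within each value-class and, if necessary, conjugating entrywise (which swaps $\omega\leftrightarrow\omega^2$) I would bring the second row to $(1,1,\omega,\omega,\omega^2,\omega^2)$. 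The only remaining freedom concerns the shared entry $H_{22}$, a cube root lying on both special lines; up to conjugation this splits the argument into the two cases $H_{22}=1$ and $H_{22}=\omega$, and in each case the second column is likewise normalized to a fixed cube-root vector.

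Next I would assume $H\notin K_6^{(3)}$ and extract the constraints this imposes. By Lemma \ref{Vanish} the matrix then has no vanishing $3\times3$ minor, so Lemma \ref{DDD} guarantees that $H$ is the unique unitary completion of any three of its orthogonal rows and columns; moreover, by Theorem \ref{K6} no row or column may carry a vanishing sum of order $2$ or $4$ and the core may contain no $-1$, while Corollary \ref{C213} forbids three equal entries in any noninitial line. I would then introduce the unknown unimodular entries of a third row and a third column, relate them to the known cube-root lines through the Decomposition formula and the Haagerup identity of Theorem \ref{HTIO} (equivalently Corollary \ref{HT}), and adjoin the orthogonality and unimodularity equations. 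This produces a polynomial system in finitely many indeterminates of modulus one, which I would reduce with a Gr\"obner basis exactly as elsewhere in the paper.

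The decisive step, and the one I expect to be the main obstacle, is to show that this system has essentially no nondegenerate solution other than $S_6^{(0)}$. Concretely, I would argue that every solution either makes some line acquire a vanishing two- or four-term sum, or a $-1$ appear in the core, or two rows or columns become proportional (all of which, by Theorem \ref{K6}, Lemma \ref{Vanish} and Corollary \ref{C213}, throw $H$ back into $K_6^{(3)}$, contrary to assumption), or else forces every remaining entry of the lower right $4\times4$ block to be a cube root. In the latter situation $H$ is a complex Hadamard matrix all of whose entries are cube roots, and the same finite enumeration---now restricted to the alphabet $\{1,\omega,\omega^2\}$---shows that such a matrix is, up to the residual permutation and conjugation equivalence, exactly $S_6^{(0)}$. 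The hard part is precisely controlling the elimination so that the cube-root alternative is the only one surviving outside $K_6^{(3)}$, and checking the two cases $H_{22}=1$ and $H_{22}=\omega$, together with the relative placement of the two special lines, without omitting a configuration.
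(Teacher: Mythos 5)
Your first steps are exactly the paper's: the counting argument forcing each cube root to occur twice in the special row and column, the normalization of that row to $(1,1,\omega,\omega,\omega^2,\omega^2)$, and the case split on the common entry are all correct (and merging the cases $H_{22}=\omega$ and $H_{22}=\omega^2$ by entrywise conjugation is legitimate, since the core-contains-$-1$ characterization in Theorem \ref{K6} makes $K_6^{(3)}$ conjugation-closed and $\overline{S_6^{(0)}}$ is equivalent to $S_6^{(0)}$). The genuine gap is that your proof stops exactly where the lemma's content begins: the step you yourself call ``the main obstacle''---showing that the resulting system has no solutions outside $K_6^{(3)}$ other than all-cube-root matrices---is never carried out, only predicted as the hoped-for outcome of an unspecified Gr\"obner elimination. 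Nothing in the proposal excludes a solution with non-cubic unimodular entries in the lower right block avoiding every degeneracy on your list; ruling that out \emph{is} the lemma. (Your terminal sub-claim, that a $6\times 6$ Hadamard matrix whose entries are all cube roots must be equivalent to $S_6^{(0)}$, is likewise delegated to a finite enumeration that is not performed.)

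What you are missing is that this decisive step is elementary precisely because there is a cubic row \emph{and} a cubic column, so no elimination machinery is needed. In the case $H_{22}=1$, write the third row as $(1,\omega,a,b,c,d)$ (its second entry lies in the cubic column). Orthogonality to the first two rows gives the linear system
\[
(a+b)+(c+d)=\omega^2,\qquad (a+b)\,\omega^2+(c+d)\,\omega=\omega^2,
\]
which is nonsingular, so $a+b=-\omega$ and $c+d=-1$. Now invoke the rigidity of unimodular pairs with prescribed nonzero sum (the Decomposition formula \eqref{s12}; equivalently $uv=(u+v)^2/|u+v|^2$): the unordered pairs are forced, namely $\{a,b\}=\{1,\omega^2\}$ and $\{c,d\}=\{\omega,\omega^2\}$. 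Thus the third row consists of cube roots automatically---no Gr\"obner basis, no case analysis of degeneracies---and repeating this two-line computation for the remaining rows, using orthogonality among them to resolve the swap ambiguities, fills out the matrix and identifies it as $S_6^{(0)}$. In the case $H_{22}=\omega$, the same pair of orthogonality relations combined with unimodularity of the entries yields exactly two possibilities: $a=-1$, so the core contains $-1$ and $H$ belongs to $K_6^{(3)}$ by Theorem \ref{K6}, or $a=\omega$, $b=\omega^2$, which produces a cubic row and column meeting in a $1$ and reduces to the first case. This forcing argument is the actual proof in the paper; without it, or a completed substitute for it, your proposal remains a plan rather than a proof.
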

Let us denote by $\omega$ the principal cubic root of unity once and for all, that is $\omega:=\mathbf{e}^{2\pi\mathbf{i}/3}$.
\begin{proof}
First suppose that the cubic row and column meet in a common $1$. Then our matrix looks like as the matrix $H$ on the left below, up to equivalence:
\[H=\left[\begin{array}{rrrrrr}
1 & 1 & 1 & 1 & 1 & 1\\
1 & 1 & \omega & \omega & \omega^2 & \omega^2\\
1 & \omega & a & b & c & d\\
1 & \omega & \ast & \ast & \ast & \ast\\
1 & \omega^2 & \ast & \ast & \ast & \ast\\
1 & \omega^2 & \ast & \ast & \ast & \ast\\
\end{array}\right],\ \ \ 
H'=\left[\begin{array}{rrrrrr}
1 & 1 & 1 & 1 & 1 & 1\\
1 & 1 & \omega & \omega & \omega^2 & \omega^2\\
1 & a & 1 & b & c & d\\
1 & \ast & \omega & \ast & \ast & \ast\\
1 & \ast & \omega^2 & \ast & \ast & \ast\\
1 & \ast & \omega^2 & \ast & \ast & \ast\\
\end{array}\right].\]
Now by orthogonality of the first three rows we find that $a+b=-\omega$ and $c+d=-1$. Hence, we can assume, up to equivalence, that $a=1, b=-\omega^2$, and $c=\omega$, $d=\omega^2$. But then, we can fill out the fourth row, and the third and fourth column as well. We conclude that the obtained matrix is equivalent to $S_6^{(0)}$.

Secondly, let us suppose that the cubic row and column meet in a common $\omega$. This matrix $H'$ is depicted on the right above. Then, by calculating the orthogonality equations, we find that either $a=-1$, and hence the matrix (if it can be completed to a Hadamard at all) belongs to the family $K_6^{(3)}$, or $a=\omega$, $b=\omega^2$, and, up to equivalence $c=\omega$, $d=\omega^2$. This implies that the third row and third column feature cubic entries only meeting in a common $1$ therefore reducing the situation to the first case. The third case, namely when the cubic row and column meet in a common $\omega^2$ can be treated similarly.
\end{proof}
Now we turn to the presetting of the submatrix $E(a,b,c,d)$ (see \eqref{E}). In order to avoid the case when the system of equations \eqref{C11}--\eqref{HEQ} is linearly dependent we need to exclude various input quadruples $(a,b,c,d)$. However, it shall turn out that we are free to do such restrictions, up to equivalence. Before proceeding further, let us define the following two-variable function mapping $\mathbb{T}^2$ to $\mathbb{C}$ as follows:
\[\mathcal{E}(x,y):=x+y+x^2+y^2+xy^2+x^2y.\]
We say that $y$ is an elliptical pair of $x$, if $\mathcal{E}(x,y)=0$. Observe that for a given $x\neq -1$ the sum of its elliptical pairs read $y_1+y_2=-(1+x^2)/(1+x)$. The following is a strictly technical
\begin{proposition}[(Canonical transformation)]\label{CTr}
Suppose that we have a complex Hadamard matrix $H$ inequivalent from $S_6^{(0)}$ and any of the members of the family $K_6^{(3)}$. Then $H$ has a $3\times 3$ submatrix $E(a,b,c,d)$ as in formula \eqref{E}, up to equivalence, satisfying
\beql{canCC}
(b-1)(c-1)(b-d^2)(c-d^2)(b-c)(bc-d)\mathcal{E}(b,d)\mathcal{E}(c,d)\neq 0.
\eeq
\end{proposition}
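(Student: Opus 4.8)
The plan is to use the full freedom of the equivalence relation to select the submatrix $E$ entry by entry, and to show that a selection avoiding all eight factors of \eqref{canCC} can fail only for the two excluded matrices. First I would keep $H$ in its dephased form and take the first row and column of $H$ as the first row and column of $E$; permuting the remaining rows and columns (a permutation equivalence) then realises $E(a,b,c,d)$ as any $2\times 2$ block of the $5\times 5$ core, with $a=H_{r_1c_1}$, $b=H_{r_1c_2}$, $c=H_{r_2c_1}$, $d=H_{r_2c_2}$ for $r_1,r_2,c_1,c_2\in\{2,\dots,6\}$. The crucial structural observation is that \eqref{canCC} does not involve $a$ and is invariant under $b\leftrightarrow c$ (transposing the lower $2\times 2$ block), so the opposite corner $a$ is irrelevant and the task reduces to choosing a distinguished corner $d$ together with its column neighbour $b$ and row neighbour $c$ so that the relations $b=1$, $c=1$, $b=c$, $b=d^2$, $c=d^2$, $bc=d$, $\mathcal E(b,d)=0$ and $\mathcal E(c,d)=0$ all fail.

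Next I would record what the hypotheses forbid globally. Since $H\notin K_6^{(3)}$, Theorem \ref{K6} rules out a $-1$ in the core and any order-$2$ vanishing sum in a line, so no core entry equals $-1$ and no two entries of a common line are antipodal; Lemma \ref{AUX1} and Corollary \ref{C213} forbid any value occurring three times in a noninitial line, whence each line carries at most one core entry equal to $1$; and Lemma \ref{Vanish} forbids a vanishing $3\times 3$ minor. Since $H\not\equiv S_6^{(0)}$, Lemma \ref{cubic} forbids the simultaneous presence of a noninitial row and a noninitial column built solely from cube roots of unity. For a fixed pivot $d$ the values of $b$ excluded by $b=1$, $b=d^2$ and $\mathcal E(b,d)=0$ number at most four, the last two being the elliptical pairs of $d$, whose product is $d$; note also that when $d=1$ the relation $\mathcal E(b,d)=0$ collapses to $b^2+b+1=0$, tying the exotic factors directly to cube roots of unity.

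I would then argue by contradiction, assuming that no admissible $E$ exists, i.e. that every pivot together with every choice of neighbours is blocked. A single pivot gives no slack, since its four column-candidates for $b$ could in principle all be forbidden, so the argument must be global, exploiting that both the row and the column direction, both roles of each pair (taking either entry as the pivot), and all five pivots in each line are available. Pushing the blocking condition through all these choices and bounding multiplicities by Corollary \ref{C213} forces repeated instances of the multiplicative relations $b=d^2$ and $bc=d$ and of the elliptical-pair identity (product equal to the pivot) across the $5\times 5$ core; chasing these identities pins a whole noninitial row and column to cube roots of unity, or produces a vanishing $3\times 3$ minor, or forces a value to repeat three times in a line. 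The first conclusion contradicts $H\not\equiv S_6^{(0)}$ via Lemma \ref{cubic}, the second contradicts $H\notin K_6^{(3)}$ via Lemma \ref{Vanish}, and the third contradicts $H\notin K_6^{(3)}$ via Corollary \ref{C213}.

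The main obstacle is precisely the three multiplicative factors $b=d^2$, $bc=d$ and $\mathcal E(b,d)=0$: unlike $b=1$, $c=1$ and $b=c$, they are not directly governed by the structural lemmas, and as noted a single pivot leaves no room, so the heart of the proof is to show that they cannot obstruct every admissible choice at once. The decisive point will be that their persistence forces the rigid multiplicative pattern in which elliptical pairs multiply to their pivot and degenerate to cube roots once a unit entry is present; this is exactly the cubic configuration eliminated by Lemma \ref{cubic} or the vanishing minor eliminated by Lemma \ref{Vanish}, which is what closes the contradiction and establishes the existence of an $E(a,b,c,d)$ satisfying \eqref{canCC}.
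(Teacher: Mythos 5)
Your setup is sound and matches the paper's: dephase $H$, realise $E(a,b,c,d)$ by choosing a pivot $d$ in the core together with a row neighbour $c$ and a column neighbour $b$, observe that \eqref{canCC} does not constrain $a$, and collect the global exclusions supplied by Theorem \ref{K6}, Lemma \ref{AUX1}, Corollary \ref{C213}, Lemma \ref{Vanish} and Lemma \ref{cubic}. But the heart of your argument is a single sentence --- ``pushing the blocking condition through all these choices \dots forces repeated instances of the multiplicative relations \dots chasing these identities pins a whole noninitial row and column to cube roots of unity, or produces a vanishing $3\times 3$ minor, or forces a value to repeat three times in a line'' --- and this is asserted, not proven. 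That assertion is exactly the content of the proposition; without carrying out the chase, nothing has been established. Worse, the claimed trichotomy of outcomes is factually incomplete: in the paper's CASE 3 (the pivot's row and column each contain only the one elliptical value $c_1$, and the undetermined triplet in both lines is $(d^2,\alpha,d/\alpha)$), a Gr\"obner-basis computation shows that the blocking configuration forces either membership in $K_6^{(3)}$ \emph{or} $c_1=\overline{d}$ with $d$ and $\alpha$ unimodular roots of the explicit polynomials $1+2d+2d^3+d^4$ and $1+4\alpha-2\alpha^2-8\alpha^3-8\alpha^4-8\alpha^5-2\alpha^6+4\alpha^7+\alpha^8$. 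These are not roots of unity, no vanishing minor appears, and no entry repeats three times; the resolution is not a contradiction at all, but a re-pivoting step (reset the central element to $\alpha$, whose elliptical pairs are then absent from its row). Your scheme, which expects every persistent obstruction to collapse onto one of the three excluded structures, would fail precisely here.

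The paper's proof is correspondingly not a global contradiction-and-counting argument but a constructive case analysis: first the case where the core contains a $1$ (with a sub-case for a full cubic line, where $d=\omega$, $c=\omega^2$ are taken and $b$ is a non-cubic entry), then the case of a core with no $1$, split into five sub-cases according to how the elliptical pairs $c_1,c_2$ of the pivot appear in its row and column, each resolved by an explicit choice of $b,c$, sometimes after exchanging the pivot, and once via the Gr\"obner computation above. If you want to salvage your outline you must replace the ``chasing'' sentence with this kind of exhaustive analysis, and in particular you must handle the configuration in which all candidate neighbours of a pivot are exhausted by $\{d^2,\alpha,d/\alpha\}$ --- the one place where algebra beyond the structural lemmas is genuinely required.
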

\begin{proof}
The strategy of the proof is the following: first we pick a ``central element'' $d$ from the core of the matrix and then we show that $b$ and $c$ can be set satisfying \eqref{canCC}. Recall, that by Lemma \ref{cubic} there is no a noninitial row and column composed from cubic roots of unity in the matrix.

First let us assume that there is a $1$ in the core somewhere. Suppose that there is a, to say, row full of cubics. In this case set $d=\omega$, and $c=\omega^2$. Now in the column containing $d$ there is a non-cubic entry $\gamma$, and we are free to set $b=\gamma$. If there is neither a full row nor a column of cubics in the matrix, then set $d=1$, and choose a non-cubic $c$ from its row. Note that there cannot be a further noninitial $1$ in the row or column of $d$ by Corollary \ref{C213}. Now observe that as the elliptical pairs of $1$ are $\omega$ and $\omega^2$, we can choose a suitable $b$ from the column of $d$ unless all entries there are members of the set $\{\omega, \omega^2,c,\overline{c}\}$. Note that $\omega$ together with $\omega^2$ cannot be in the column of $d$ at the same time, and from this it is easily seen that we cannot define the value of $b$ only if the column of $d$ is one of the following four cases, up to permutations: $(1,1,\omega,c,c,\overline{c})$, $(1,1,\omega,c,\overline{c},\overline{c})$, $(1,1,\omega^2,c,c,\overline{c})$, $(1,1,\omega^2,c,\overline{c},\overline{c})$. However, by normalization and by orthogonality, the sum of the entries in this column should add up to $0$, and we find in all cases that the unimodular solution to $c$ is a cubic root of unity, contradicting the choice of $c$. Therefore one of the entries in the column of $d$ is different from $\omega,\omega^2,c,\overline{c}$ which will be chosen as $b$.

Secondly, let us suppose, that there is no $1$ in the core. In particular, all entries in the core are different rowwise and columnwise.

Pick any $d$ from the core of the matrix. Let us denote by $c_1$ and $c_2$ the elliptical pairs of $d$ (maybe $c_1=c_2$, or they are undefined). Now we have several cases depending on the appearance of these values in the row and column containing $d$.

CASE $1$: $c_1$ and $c_2$ present in both the row and column containing $d$. Hence, in the row and column containing $d$ there are the entries $1$, $d$, $c_1$ and $c_2$ already, the remaining two, $\alpha$ and $\beta$ are uniquely determined by the Decomposition formula. Note that $\alpha\neq d^2$, as otherwise we would have $\beta=-(2d+d^2+d^3)/(1+d)$ (as the sum of all entries in a noninitial row add up to $0$, and the sum of the elliptical pairs is known), which is unimodular if and only if $d=\pm\mathbf{i}$ or $d=\omega$ or $d=\omega^2$. In the first case we have $\beta=\mp\mathbf{i}$ and we are dealing with a member of the family $K_6^{(3)}$. The second and third case imply that we have a full row and a full column of cubics, a contradiction. If $\beta\neq d/\alpha$ then by picking $c=\alpha$ from the row we can set $b=\beta$ in the column. Otherwise, in case we have $\beta=d/\alpha$, then reset the central element $d$ to the $\alpha$ which is in the same row as $d$. Now observe that after this exchange we should met the requirements of Case $1$ again (otherwise we are done here), and hence the elliptical pairs of $\alpha$, $\alpha_1$ and $\alpha_2$, should present in the row and column of $\alpha$, which therefore contain exactly the same entries. However $\alpha_{1,2}\neq d$, as otherwise orthogonality with the condition $\mathcal{E}(\alpha,d)=0$ would imply $d=\pm1$, a contradiction; $\alpha_{1,2}\neq d/\alpha$, as otherwise $d=\omega$ or $d=\omega^2$ would follow from the same argument implying that the row containing $\alpha$ has a noninitial $1$, a contradiction. Therefore, the only option left is that $\alpha_{1,2}=c_{1,2}$. But then we can set $c=d\neq\alpha^2$ and $b=d/\alpha\neq \alpha^2$, and we are done. 

CASE $2$: $c_1$ and $c_2$ present in (to say) the row containing $d$, but only one of these values (say $c_1$) is present in the column of $d$. Let us denote by $\alpha$ and $\beta$ the two further entries in this row which, again, are different from $d^2$. In the column of $d$ there is already $1$, $d$ and $c_1$, and observe that the remaining three entries cannot be $(d^2,\alpha,\beta)$ as this would imply $d^2=c_2$, contradicting our case-assumption. Therefore one of the three unspecified entries $\gamma$ is different from $d^2$, $\alpha$ and $\beta$. Now if $\gamma\neq d/\alpha$ then set $c=\alpha$, $b=\gamma$ otherwise set $c=\beta$, $b=\gamma$. We are done.

CASE $3$: Only the value $c_1$ is in (to say) the row containing $d$ and in the column of $d$ as well. This is a tricky case, as it might happen that the undetermined triplet in both the $d$-th row and column is precisely $(d^2, \alpha, d/\alpha)$, and therefore we cannot ensure condition \eqref{canCC}. However, from the orthogonality equation $1+d+d^2+c_1+\alpha+d/\alpha=0$, from its conjugate, and the elliptical condition $\mathcal{E}(c_1,d)=0$ we can form a system of equations, the solution of which can be found by computing a Gr\"obner basis. By investigating the results, we find that either $\alpha=-1$ or $\alpha+d=0$ leading us to the family $K_6^{(3)}$ or $c_1=\overline{d}$. In the last case, however, we can calculate the values of $d$ and $\alpha$ explicitly. In particular, we find that the values of $d$ and $\alpha$ are given by some of the unimodular roots of the following polynomials $1+2d+2d^3+d^4=0$ and $1+4\alpha-2\alpha^2-8\alpha^3-8\alpha^4-8\alpha^5-2\alpha^6+4\alpha^7+\alpha^8=0$.
Now reset the ``central'' entry $d$ to $\alpha$, and observe that the elliptical pairs of $\alpha$ are not present in its row, and therefore the conditions of this subcase are no longer met. Otherwise we can suppose that the undetermined triplet in the column of $d$ is not $(d^2,\alpha,d/\alpha)$. Set $c=\alpha\neq d^2$. Pick $\gamma$ from the column which is different from $d^2$, $\alpha$, $d/\alpha$, set $b=\gamma$ and we are done.

CASE $4$: Only the value $c_1$ is in (to say) the row containing $d$ and in the column of $d$ there is the other elliptical value $c_2\neq c_1$. We can suppose that two of the undetermined entries in the row of $d$ satisfy $\alpha\neq d^2$ and $\beta\neq d^2$ and set $c=\alpha$. Now if in the column the undefined triplet is precisely $(\alpha$, $d/\alpha$, $d^2)$, then observe that the same triplet cannot appear in the row, as otherwise $c_1=c_2$ would follow. Therefore we can reset $c$ to a value different from $\alpha, d/\alpha, d^2$, and set $b=\alpha$. Otherwise there is an entry in the column which can be set to $b$, we are done.

CASE $5$: In the column of $d$ there is no elliptical value at all. Pick any $c=\alpha\neq d^2$ from the row. Now in the column there are four unspecified entries. Clearly, one of them, say $\gamma$ will be different from $d^2$, $\alpha$ and $d/\alpha$. Set $b=\gamma$. We are done.
\end{proof}
Not every submatrix $E$ can be embedded into a complex Hadamard matrix of order $6$. To offer a necessary condition, let us recall first that an operator $A$ is called a contraction, if $\left\|A\right\|_2\leq 1$, where $\left\|.\right\|_2$ denotes both the Euclidean norm on $\mathbb{C}^6$ and the induced operator norm on the space of $6\times 6$ matrices. We have the following
\begin{lemma}\label{Eig}
If $A$ is any $3\times 3$ submatrix of a complex Hadamard matrix $H$ of order $6$ then $A/\sqrt{6}$ is a contraction.
\end{lemma}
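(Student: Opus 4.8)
The plan is to exploit the fact that the defining identity $HH^\ast=6I_6$ says precisely that $U:=H/\sqrt6$ is unitary, and hence is itself a contraction with $\|U\|_2=1$. The submatrix $A/\sqrt6$ is obtained from $U$ by selecting three rows (indexed by a set $S$) and three columns (indexed by a set $T$), so the real content of the lemma is the general principle that any submatrix of a contraction is again a contraction. I would prove this directly rather than invoke it, since the proof is very short.

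First I would set up the two coordinate maps. Let $J_T:\mathbb{C}^3\to\mathbb{C}^6$ be the isometric embedding that places a vector into the coordinates indexed by $T$ and fills the remaining coordinates with zeros, and let $P_S:\mathbb{C}^6\to\mathbb{C}^3$ be the orthogonal projection onto the coordinates indexed by $S$ (composed with the obvious identification of that coordinate subspace with $\mathbb{C}^3$). Writing out entries for $S=\{s_1,s_2,s_3\}$ and $T=\{t_1,t_2,t_3\}$ gives $(P_S U J_T)_{kl}=U_{s_k,t_l}$, so that $A/\sqrt6=P_S\,U\,J_T$. Since $J_T$ is an isometry we have $\|J_T\|_2=1$, and since $P_S$ is a coordinate projection we have $\|P_S\|_2\le 1$.

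Then I would simply estimate, for an arbitrary $x\in\mathbb{C}^3$,
\[
\left\|\tfrac{1}{\sqrt6}Ax\right\|_2=\left\|P_S U J_T x\right\|_2\le\left\|U J_T x\right\|_2=\left\|J_T x\right\|_2=\left\|x\right\|_2,
\]
using in turn that $P_S$ is norm-nonincreasing, that $U$ is unitary (hence preserves the Euclidean norm), and that $J_T$ is an isometry. This shows $\|A/\sqrt6\|_2\le 1$, i.e.\ that $A/\sqrt6$ is a contraction.

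There is essentially no genuine obstacle here; the only point requiring a little care is the bookkeeping verifying that a $3\times 3$ submatrix really does factor as $P_S U J_T$, which is immediate from the entrywise identity above. One could alternatively phrase the whole argument purely at the level of operator norms as $\|A/\sqrt6\|_2=\|P_S U J_T\|_2\le\|P_S\|_2\,\|U\|_2\,\|J_T\|_2=1$, which avoids mentioning vectors at all; I would likely present the vector version since it makes the role of each factor most transparent.
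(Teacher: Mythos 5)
Your proof is correct and is essentially the same argument as the paper's: the paper pads the vector $s$ to the block vector $s'=(s,0)^T$ (your $J_T$), uses unitarity of $H/\sqrt 6$, and discards the remaining coordinates via $\left\|(As,Cs)^T\right\|_2\geq\left\|(As,0)^T\right\|_2$ (your $P_S$). The only difference is presentational --- the paper runs the identical chain of inequalities as a proof by contradiction in block notation, whereas you state it as a direct norm estimate through the factorization $A/\sqrt6=P_S U J_T$.
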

\begin{proof}
Clearly, we can assume that this submatrix $A$ is the upper left of the matrix $H$, which we will write in block form, as follows:
\[H=\left[\begin{array}{cc}
A & B\\
C & D\\
\end{array}\right].\]
Now suppose, to the contrary that there is some vector $s$, such that $\left\|As\right\|_2>\sqrt6\left\|s\right\|_2$ and consider the block vector $s':=(s,0)^T\in\mathbb{C}^6$. We have
\[\left\|Hs'\right\|_2=\left\|(As,Cs)^T\right\|_2\geq\left\|(As,0)^T\right\|_2=\left\|As\right\|_2>\sqrt6\left\|s\right\|_2=\sqrt6\left\|s'\right\|_2=\left\|Hs'\right\|_2,\]
where in the last step we used that the matrix $H/\sqrt6$ is unitary.
\end{proof}
In particular, we have the following
\begin{corollary}\label{CE}
If the submatrix $E$ can be embedded into a complex Hadamard matrix of order $6$, then every eigenvalue $\lambda$ of the matrix $E^\ast E$ satisfy $\lambda\leq 6$.
\end{corollary}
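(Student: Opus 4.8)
The plan is to read this off immediately from Lemma \ref{Eig}. By hypothesis $E$ can be embedded into a complex Hadamard matrix $H$ of order $6$, which means precisely that $E$ occurs as a $3\times 3$ submatrix of such an $H$. Hence Lemma \ref{Eig} applies with $A=E$, and gives that $E/\sqrt6$ is a contraction, i.e.\ $\|E/\sqrt6\|_2\leq 1$, equivalently $\|E\|_2\leq\sqrt6$.

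The next step is to translate this operator-norm bound into a spectral statement about $E^\ast E$. The matrix $E^\ast E$ is Hermitian and positive semidefinite, so all of its eigenvalues are real and non-negative; write $\lambda_{\max}$ for the largest. I would then invoke the standard identity $\|E\|_2^2=\lambda_{\max}(E^\ast E)$, which expresses that the induced operator norm $\|E\|_2$ equals the largest singular value of $E$, i.e.\ the square root of the top eigenvalue of $E^\ast E$. Combining this with the bound $\|E\|_2\leq\sqrt6$ yields $\lambda_{\max}(E^\ast E)=\|E\|_2^2\leq 6$.

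Finally, since every eigenvalue $\lambda$ of the positive semidefinite matrix $E^\ast E$ satisfies $0\leq\lambda\leq\lambda_{\max}\leq 6$, the claim follows at once. There is no genuine obstacle in this argument: it is a one-line consequence of the preceding lemma together with the definition of the spectral/operator norm. The only point that deserves a word of justification is the identity $\|E\|_2^2=\lambda_{\max}(E^\ast E)$ relating the operator norm to the spectrum of $E^\ast E$, which is the elementary characterization of the matrix norm as the largest singular value.
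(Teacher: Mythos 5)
Your proof is correct and takes exactly the route the paper intends: Corollary \ref{CE} is stated there as an immediate consequence of Lemma \ref{Eig} (``In particular, we have the following''), with the contraction property $\|E/\sqrt6\|_2\leq 1$ translated into the spectral bound via the standard identity $\|E\|_2^2=\lambda_{\max}(E^\ast E)$. Your write-up simply makes this one-line deduction explicit, which is all the paper's own proof amounts to.
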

Corollary \ref{CE} is a useful criterion to show that a matrix $E$ cannot be embedded into a complex Hadamard matrix, however, it is unclear how to utilize it for our purposes. In particular, we do not know how to characterize those $3\times 3$ matrices which satisfy its conditions. Also it is natural to ask whether the presence of the large eigenvalues is the only obstruction forbidding the submatrix $E$ to be embedded. The answer to this question might depend on the dimension, as it is easily seen that while every $2\times 2$ matrix can be embedded into a complex Hadamard matrix of order $4$, only a handful of very special $2\times 2$ matrices can be embedded into a complex Hadamard matrix of order $5$ due to the finiteness result of Haagerup \cite{haagerup}.

Now we are ready to present a new, previously unknown family of complex Hadamard matrices. The next section gives an overview of the results.
\section{The construction: A high-level perspective}\label{HLev}
Here we describe the generic family $G_6^{(4)}$ from a high-level perspective. In particular, we outline the main steps only, and do not discuss some degenerate cases which might come up during the construction. The next section is dedicated to investigate the process in details. The main result of this paper is the following
\begin{construction}[(The Dilation Algorithm)]\label{mC}
Do the following step by step to obtain complex Hadamard matrices of order $6$.\normalfont
\begin{enumerate}[\#1: ]
\item \ttfamily{INPUT}\normalfont: the quadruple $(a,b,c,d)$, forming the upper left $3\times 3$ submatrix $E(a,b,c,d)$, as in formula \eqref{E}.
\item Use Haagerup's trick to the first three rows of $G_6^{(4)}$ (see \eqref{G6}) to obtain a quadratic equation to $f$:
\beql{Flin}
\mathcal{F}_1+\mathcal{F}_2f+\mathcal{F}_3f^2=0,
\eeq
where the coefficients $\mathcal{F}_1$, $\mathcal{F}_2$ and $\mathcal{F}_3$ depend on the parameters $a,b,c,d$ and the indeterminate $e$, and derive the following linearization formula from it:
\beql{lin}
f^2=-\frac{\mathcal{F}_1}{\mathcal{F}_3}-\frac{\mathcal{F}_2}{\mathcal{F}_3}f.
\eeq
\item Use Theorem \ref{HTIO} to obtain another quadratic equation to $f$:
\beql{Glin}
\mathcal{G}_1+\mathcal{G}_2f+\mathcal{G}_3f^2=0,
\eeq
where, again, the coefficients $\mathcal{G}_1$, $\mathcal{G}_2$ and $\mathcal{G}_3$ depend on the parameters $a,b,c,d$ and the indeterminate $e$. Plug the linearization formula \eqref{lin} into \eqref{Glin} and rearrange to obtain the companion value of $e$ $f=F(e)$, where
\beql{formF}
F(e):=-\frac{\mathcal{F}_3\mathcal{G}_1-\mathcal{F}_1\mathcal{G}_3}{\mathcal{F}_3\mathcal{G}_2-\mathcal{F}_2\mathcal{G}_3}.
\eeq
\item As $|f|=1$ should hold, calculate the sextic polynomial $G(e)$ coming from the equation $F(e)\overline{F(e)}-1= 0$ and solve it for $e$.
\item Amongst the roots of $G$ find all unimodular triplets $(e,s_1,s_2)$ satisfying $e+s_1+s_2=-1-a-b$, calculate the companion values $f=F(e)$, $s_3=F(s_1)$ and $s_4=F(s_2)$ through formula \eqref{formF} and store all sextuples $(e,s_1,s_2,f,s_3,s_4)$ in a solution set called $SOL_B$.
\item Repeat steps $\#2$--$\#6$ to the transposed matrix (i.\ e.\ to the first three columns), mutatis mutandis to obtain the solution set $SOL_C$.
\item For every pair of sextuples from $SOL_B$ and $SOL_C$ construct the submatrices $B$ and $C$, check if the first three rows and columns are mutually orthogonal and finally use Lemma \ref{DDD} to compute the lower right submatrix $D$ through formula \eqref{DD}.
\item \ttfamily{OUTPUT:}\normalfont all unimodular matrices found in step $\#7$.
\end{enumerate}
\end{construction}
Construction \ref{mC} gives the essence of the new family discovered, and in the next section we shall give it a mathematically rigorous, low-level look.
\section{The construction: The nasty details}
Here we investigate the steps of Construction \ref{mC} in details.

\smallskip
\noindent STEP \#1: Choose a quadruple $(a,b,c,d)$ in compliance with the Canonical Transformation described by Proposition \ref{CTr} as the \ttfamily{INPUT}\normalfont, and form the submatrix $E$. Check if it meets the requirements of Corollary \ref{CE}; if yes, then proceed, otherwise conclude that it cannot be embedded into a complex Hadamard matrix of order $6$. Experimental results show that once three out of the four parameters are fixed the last one can be easily set to a value such that the quadruple $(a,b,c,d)$ leads to a complex Hadamard matrix. Heuristically this means that there is nothing ``mystical'' in the choice of the initial quadruple and hence the parameters should be independent from each other.

\smallskip
\noindent STEP \#2: To obtain formula \eqref{lin} we need to see that $\mathcal{F}_3\not\equiv 0$, independently of $e$. Indeed, suppose otherwise, which means that the following system of equations (where the last two are the conjugate of the first two, up to some irrelevant constant factors)
\[\left\{\begin{array}{ccc}
a b c + a^2 b c + a b d + a b^2 d + a^2 b c d + a b^2 c d & \equiv & 0,\\
b^2 c + a b^2 c + a^2 d + a^2 b d + a c d + b c d & \equiv & 0,\\
a + b + a c + a b c + b d + a b d & \equiv & 0,\\
a^2 b + a b^2 + b c + b^2 c + a d + a^2 d & \equiv & 0,\\
\end{array}\right.\]
are fulfilled. We compute a Gr\"obner basis and find that the polynomial
\[b c (1 + c^2) (c - d) d (1 + d^2) (c^2 + d^2) (1 + d + d^2)\]
is a member of it. After substituting back into the original equations we find that there is either a vanishing sum of order $2$ in $E$ or $a=b=1$ and therefore the whole family is a member of $K_6^{(3)}$, or we have $E=F_3$ or $E=F_3^\ast$ but these matrices have $b=c$ which however is not allowed by the Canonical Transformation.

It might happen that $\mathcal{F}_3\not\equiv 0$ but there is a unimodular $e$ making it vanish, which cannot be anything else, but
\beql{FE}
e=\frac{a^2 b+a^2 d+a b^2+a d+b^2 c+b c}{a b c+a b d+a c+a+b d+b}.
\eeq
Nevertheless, we can suppose that in case of one of the pairs $(e,f)$, $(s_1,s_3)$ and $(s_2,s_4)$ we do not set $e$ as above, otherwise we would have $e=s_1=s_2$ which, by Lemma \ref{AUX1} would imply $e=s_1=s_2=-1$, obtaining some member of the family $K_6^{(3)}$ by Corollary \ref{C213}. Hence, we can suppose that $e$ is different than the value described by formula \eqref{FE} above, and we conclude that $\mathcal{F}_3\neq 0$.

\smallskip
\noindent STEP \#3:
Clearly, one cannot expect to recover a unique $f$ from $e$ in general, as formula \eqref{formF} might suggests. Indeed, there are complex Hadamard matrices in which $s_1=e$, but $s_3\neq f$. The reason for this phenomenon is that formulas \eqref{Flin} and \eqref{Glin} might be linearly dependent. After plugging \eqref{lin} into \eqref{Glin} we obtain the expression
\[\mathcal{F}_3\mathcal{G}_1-\mathcal{F}_1\mathcal{G}_3+\left(\mathcal{F}_3\mathcal{G}_2-\mathcal{F}_2\mathcal{G}_3\right)f=0,\]
which can lead us to one of the following three cases:

CASE 1: Both the polynomials $\mathcal{F}_3\mathcal{G}_1-\mathcal{F}_1\mathcal{G}_3$ and $\mathcal{F}_3\mathcal{G}_2-\mathcal{F}_2\mathcal{G}_3$ vanish identically, independently of $e$, meaning that in this case we do not have another condition on $f$. Luckily this can never happen, as by calculating a Gr\"obner basis (again, to speed up the computations we have added the conjugates of the equations as well) we find that the polynomial
\[(b-1) b^2 (b - c) c (b c - d) (1 + c + d) (b - d^2) \mathcal{E}(c,d)\]
is a member of the basis. Therefore we have either $c+d=-1$ or one of the degenerate cases described in the Canonical Transformation. The case $c+d=-1$ implies that the set $\{c,d\}$ consists of nontrivial cubic roots only. By directly solving the corresponding equations, we find that the quadruples $(1,1,\omega,\omega^2)$, $(\omega^2,\omega,\omega,\omega^2)$ and $(1,1,\omega^2,\omega)$, $(\omega,\omega^2,\omega^2,\omega)$ can vanish both polynomials, however these cases were excluded by the Canonical Transformation.

CASE 2: Both the polynomials $\mathcal{F}_3\mathcal{G}_1-\mathcal{F}_1\mathcal{G}_3$ and $\mathcal{F}_3\mathcal{G}_2-\mathcal{F}_2\mathcal{G}_3$ vanish for some $|e|=1$, meaning that in this case we do not have another condition on $f$. However, having these numbers $e$ at our disposal we can recover the two possible values of $f$ from \eqref{lin}. Once we have the candidate pairs $(e,f_1)$ and $(e,f_2)$ we readily calculate the remaining pairs $(s_1,s_3)$ and $(s_2,s_4)$ through the Decomposition Formula. Store all suitable sextics $(e,s_1,s_2,f,s_3,s_4)$ in the solution set $SOL_B$. Proceed to step $\#6$.

CASE 3: One cannot set (or have not set in Case $2$) a unimodular $e$ to make these two polynomial vanish at the same time. Hence we can derive formula \eqref{formF} for $F(e)$. Proceed to step $\#4$.

\smallskip
\noindent STEP \#4:
Next we need to ensure that $f$ is of modulus one. To do this, we calculate the fundamental polynomial
\[\mathcal{P}_{a,b,c,d}(e)\equiv \left|\mathcal{F}_3\mathcal{G}_1-\mathcal{F}_1\mathcal{G}_3\right|^2-\left|\mathcal{F}_3\mathcal{G}_2-\mathcal{F}_2\mathcal{G}_3\right|^2.\]
After some calculations, it will be apparent that $\mathcal{P}$ has the following remarkable structure:
\[\mathcal{P}\equiv a^8b^8c^6d^6\overline{P}+a^8b^8c^6d^6\overline{P}(2+2\overline{a}+2\overline{b})e+a^8b^8c^6d^6\overline{Q}e^2+Re^3+Qe^4+P(2+2a+2b)e^5+Pe^6\]
where the coefficients $P, Q$ and $R$ depend on the quadruple $(a,b,c,d)$ only. If $\mathcal{P}\equiv 0$ then the construction fails. Otherwise we find all possible roots of $\mathcal{P}(e)$ of modulus one. 

\smallskip
\noindent STEP \#5:
If the number on the right hand side of \eqref{FE} is not of modulus one, then the r\^ole of the pairs $(e,f), (s_1,s_3)$ and $(s_2,s_4)$ is symmetric, and from all of the roots of $\mathcal{P}$ of modulus one we select all possible triplets $(e,s_1,s_2)$ satisfying $e+s_1+s_2=-1-a-b$, which is needed to ensure orthogonality of the first two rows. From \eqref{formF} we compute the unique companion values $f=F(e), s_3=F(s_1)$ and $s_4=F(s_2)$.

Otherwise, should the number on the right hand side of \eqref{FE} is of modulus one, then for every root $e$ of $\mathcal{P}$ we calculate its unique companion value $f=F(e)$, and then we use the Decomposition formula to determine the pairs $(s_1,s_3)$ and $(s_2,s_4)$.

If no unimodular roots are found at this point then the matrix $E(a,b,c,d)$ cannot be embedded into a complex Hadamard matrix of order $6$, and we do not proceed any further.

At the end of this step we store all obtained sextics $(e,s_1,s_2,f,s_3,s_4)$ in the solution set $SOL_B$. Typically two sextics are found.

\smallskip
\noindent STEP \#6:
In this step one constructs the first three columns along the lines of steps $\#2$--$\#5$ described above and obtain the set $SOL_C$ in a similar way.

\smallskip
\noindent STEP \#7:
For every candidate solution from $SOL_B$ and $SOL_C$ we check if the first three rows and columns are orthogonal, disregard those cases in which the submatrix $B$ is singular and finally use Lemma \ref{DDD} to obtain the lower right submatrix $D$. Note that by Lemma \ref{Vanish} we disregard members of the family $K_6^{(3)}$ only. We check if $D$ is composed of unimodular entries.

\smallskip
\noindent STEP \#8:
Finally, we \ttfamily OUTPUT \normalfont all unimodular matrices found during the process. We remark here that by Corollary \ref{embC} if no unimodular matrices were found then the submatrix $E$ cannot be embedded into any complex Hadamard matrices of order $6$. If unimodular matrices are found, then typically we find two matrices, as the solution set $SOL_B$ and $SOL_C$ contains two suitable sextics each, however, experimental results show that for each sextic in $SOL_B$ there is a unique sextic in $SOL_C$ making $D$ unimodular as required.

\bigskip
We have finished the discussion of Construction \ref{mC}. The results are summarized in the following
\begin{theorem}
Start from a submatrix $E$ as in \eqref{E} and suppose that there are only finitely many (invertible) candidate submatrices $B$ and $C$ such that the first three rows and columns of the matrix $G_6$ (see \eqref{G6}) are orthogonal. Then Construction \ref{mC} gives an exhaustive list of all complex Hadamard matrices of order $6$, up to equivalence, containing $E$ as a submatrix.
\end{theorem}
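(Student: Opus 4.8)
The plan is to verify that Construction \ref{mC}, when executed over all admissible inputs $(a,b,c,d)$, produces precisely the complex Hadamard matrices of order $6$ containing a fixed $E$ as a submatrix, under the stated finiteness hypothesis. The logical skeleton is an \emph{if and only if}: every matrix produced by the algorithm is a genuine complex Hadamard matrix containing $E$ (soundness), and conversely every such Hadamard matrix arises as an output (completeness). Soundness is almost immediate from the design of the steps: by construction Step $\#7$ only outputs matrices whose first three rows and columns are mutually orthogonal and whose lower-right block $D$, computed via Lemma \ref{DDD} and formula \eqref{DD}, turns out to be unimodular; Lemma \ref{DDD} guarantees that the resulting $6\times 6$ matrix is unitary, and unitarity together with unimodular entries is exactly the defining property $HH^\ast = 6I_6$ of a complex Hadamard matrix. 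So the real content is completeness.

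For completeness I would argue as follows. Fix a complex Hadamard matrix $H$ of order $6$ containing $E(a,b,c,d)$ as its upper-left block, as in \eqref{G6}. By Lemma \ref{Vanish} any matrix with a vanishing $3\times 3$ minor lies in $K_6^{(3)}$, and by Lemma \ref{cubic} the exceptional matrix $S_6^{(0)}$ has its own description; since these degenerate cases are explicitly set aside (Step $\#7$ only discards members of $K_6^{(3)}$, and Proposition \ref{CTr} lets us assume $E$ satisfies the canonical genericity condition \eqref{canCC}), I may assume $H$ is generic, so that $B$ is invertible and Lemma \ref{DDD} applies. The key point is that the blocks $B$ and $C$ of $H$ are \emph{forced} to lie in the solution sets produced by the algorithm. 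Indeed, the first three rows of $H$ must be mutually orthogonal, so by Theorem \ref{HTIO} and Corollary \ref{HT} the entries of $B$ satisfy the two quadratic relations \eqref{Flin} and \eqref{Glin} in $f$; the analysis of Step $\#3$ (whose three cases are exhausted by the Gröbner-basis computation ruling out global linear dependence under \eqref{canCC}) shows that $f$ is determined by $e$ through $f=F(e)$ as in \eqref{formF}, except in the explicitly handled Case $2$. The unimodularity $|f|=1$ then forces $e$ to be a root of the fundamental polynomial $\mathcal{P}_{a,b,c,d}$ of Step $\#4$, and the orthogonality constraint $e+s_1+s_2 = -1-a-b$ of Lemma \ref{L1} pins the triplet $(e,s_1,s_2)$ down to one enumerated in Step $\#5$. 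Hence $B \in SOL_B$, and symmetrically $C \in SOL_C$.

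Once $B$ and $C$ are known to lie in the finite solution sets, the remaining block $D$ of $H$ is uniquely determined: since $H$ is generic its first three rows and columns are orthogonal and contain no vanishing minor, so Lemma \ref{DDD} guarantees that $D = -CE^\ast(B^{-1})^\ast$ is the \emph{unique} completion to a unitary matrix, and this is exactly the $D$ computed in Step $\#7$ via \eqref{DD}. As $H$ is Hadamard, this $D$ is unimodular, so the pair $(B,C)$ passes the test in Step $\#7$ and $H$ is emitted in Step $\#8$. This closes the loop: $H$ is an output of the Construction. Finally, the finiteness hypothesis on $SOL_B$ and $SOL_C$ is what makes Step $\#7$ a terminating, exhaustive search, matching the role of Corollary \ref{embC}, so the list is complete and the search is genuinely an algorithm rather than an infinite enumeration.

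The main obstacle I anticipate is the \emph{forcing} argument in the middle paragraph, namely establishing rigorously that the generic assumptions \eqref{canCC} are strong enough to exclude every source of non-uniqueness or degeneracy in Steps $\#2$ through $\#5$ — in particular that $\mathcal{F}_3 \not\equiv 0$, that Case $1$ of Step $\#3$ never occurs, and that $\mathcal{P}_{a,b,c,d} \not\equiv 0$ — for \emph{all} admissible $(a,b,c,d)$ and not merely generically. Each of these is underwritten by a Gröbner-basis computation whose output factors cleanly into the forbidden cases of the Canonical Transformation, so the proof must carefully check that no additional unimodular solution branch slips through; this is precisely where the detailed case analysis of Section $4$ does the heavy lifting, and assembling those local verifications into a single clean statement is the delicate part.
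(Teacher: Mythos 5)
Your proposal is correct and follows essentially the same route as the paper: the paper states this theorem as a summary of its Section~4 discussion rather than giving a separate proof, and your soundness/completeness assembly uses exactly the paper's own ingredients in the paper's own way --- Lemma~\ref{DDD} and formula \eqref{DD} for soundness, the forcing of $B$ and $C$ into $SOL_B$ and $SOL_C$ via Theorem~\ref{HTIO}, Corollary~\ref{HT} and the Gr\"obner-basis genericity facts of Steps $\#2$--$\#4$, Lemma~\ref{Vanish} to dispose of singular blocks (discarding only members of $K_6^{(3)}$, just as Step~$\#7$ does), and the finiteness hypothesis playing the role it plays in Corollary~\ref{embC}. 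Your closing caveat about verifying the degeneracy exclusions for all admissible quadruples is precisely the caveat the paper itself acknowledges in its remarks on the case $\mathcal{P}\equiv 0$, so it is not a gap relative to the paper's argument.
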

The interested reader might want to see an example of generic Hadamard matrices which can be described by closed analytic formul\ae, that is for which the fundamental polynomials $\mathcal{P}_{a,b,c,d}$ and $\mathcal{P}_{a,c,b,d}$ are both solvable. Such a matrix can be obtained when we choose the input quadruple $(a,\overline{a},c,a)$ where the real part of $a$ is the unique real solution of $4\Re[a]^3-2\Re[a]+1=0$ and $c=(-a^3+a^2+a+1)/(a^4+a^3+a^2-a)$.
It is easily seen that the matrices we obtain starting from the submatrix $E(a,\overline{a},c,a)$ are inequivalent from $S_6^{(0)}$ and do not belong to the family $K_6^{(3)}$.
\begin{rem}
When $\mathcal{P}\equiv 0$ then the main difficulty we are facing with is that we have infinitely many candidate submatrices $B$. In this case we have the trivial restriction $\eqref{eq1}$ on $e$, while the companion value $f$ coming from \eqref{formF} is unimodular unconditionally. Although in principle we can find three orthogonal rows through the Decomposition formula for every suitable $e$, we do not know which one to favourize in order to obtain a unimodular submatrix $D$ via formula \eqref{DD}. Also, it might happen that the polynomial $\mathcal{P}_{a,c,b,d}$ obtained during step $\#6$ shall vanish as well bringing another free parameter into the game making things even more complicated. In contrast, if both $\mathcal{P}_{a,b,c,d}\not\equiv0$ and $\mathcal{P}_{a,c,b,d}\not\equiv0$, then we have a finitely many choices for the submatrices $B$ and $C$ and we can use Corollary \ref{embC} to conclude the construction.
\end{rem}
\begin{rem}
The polynomial $\mathcal{P}$ formally can vanish when we have $\mathcal{F}_3\mathcal{G}_1-\mathcal{F}_1\mathcal{G}_3\equiv\mathcal{F}_3\mathcal{G}_2-\mathcal{F}_2\mathcal{G}_3\equiv0$, however this is excluded by the Canonical Transformation and explained in details in Case $1$ of step $\#3$. It might vanish for some other, non-trivial quadruples as well making the whole construction process fail. In theory, the common roots of the coefficients of $\mathcal{P}$ can be calculated by means of Gr\"obner bases, but as these coefficients are rather complicated obtaining such a basis turned out to be a task beyond our capabilities. Nevertheless, we conjecture that the case $\mathcal{P}\equiv 0$ can be excluded completely in a similar fashion as we disregarded various quadruples during the Canonical Transformation. This would mean that all complex Hadamard matrices of order $6$, except from $S_6^{(0)}$ and $K_6^{(3)}$, can be recovered from Construction \ref{mC}.
\end{rem}
It is reasonable to think that every complex Hadamard matrix of order $6$ has some $3\times 3$ submatrix $E$ leading to nonvanishing fundamental polynomials. In particular, we do not expect any complex Hadamard matrices of order $6$ (except maybe $S_6^{(0)}$ and $K_6^{(3)}$) which cannot be recovered from Construction \ref{mC}. Therefore we formulate the following
\begin{conjecture}\label{C2}
The list of complex Hadamard matrices of order $6$ is as follows: the isolated matrix $S_6^{(0)}$, the three-parameter degenerate family $K_6^{(3)}$ and the four-parameter generic family $G_6^{(4)}$ as described above.
\end{conjecture}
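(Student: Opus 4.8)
The plan is to establish the two implications that together constitute ``exhaustiveness'': \emph{soundness}, that every matrix emitted in step \#8 is a genuine complex Hadamard matrix containing $E$, and \emph{completeness}, that every complex Hadamard matrix of order $6$ containing $E$ is recovered up to equivalence. By Proposition \ref{CTr} we may assume throughout that $E=E(a,b,c,d)$ occupies the upper-left corner and satisfies the canonical condition \eqref{canCC}; the two excluded classes $S_6^{(0)}$ and $K_6^{(3)}$ are disposed of separately by Lemma \ref{cubic} and Lemma \ref{Vanish}, so no generality is lost. The proof is then largely an assembly of the step-by-step verifications of Section 4 into a single logical chain.

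Soundness is the easy half. A matrix surviving to step \#8 has unimodular entries in every block, its first three rows and columns are orthogonal by the filtering of steps \#5--\#7, and its block $D$ was produced from the inversion formula \eqref{DD} after the singular-$B$ cases were discarded. Lemma \ref{DDD} then guarantees that the full $6\times 6$ array is unitary, and a unitary matrix of order $6$ with unimodular entries is complex Hadamard; it contains $E$ by construction. The only input needed here is the invertibility of $B$, which is exactly what the finiteness hypothesis and the discard in step \#7 supply.

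For completeness, fix any complex Hadamard matrix $H$ containing $E$ with no vanishing $3\times 3$ minor (the alternative placing $H$ in $K_6^{(3)}$ by Lemma \ref{Vanish}). Then $H$ is the unique unitary completion of its first three orthogonal rows and columns, so it suffices to prove that the sextuple $(e,s_1,s_2,f,s_3,s_4)$ determining its upper-right block $B$ lies in $SOL_B$, and symmetrically for $C$. Mutual orthogonality of the first three rows forces both \eqref{Flin}, via Corollary \ref{HT} (Haagerup's trick \eqref{HEQ}), and \eqref{Glin}, via the stronger identity \eqref{C11} of Theorem \ref{HTIO}. Step \#2 shows $\mathcal{F}_3\neq 0$ once \eqref{canCC} holds, legitimizing the linearization \eqref{lin}, and the Gr\"obner-basis analysis of step \#3 rules out linear dependence of the two quadratics (its Cases $1$ and $2$ being excluded by Proposition \ref{CTr}), so the companion map $f=F(e)$ of \eqref{formF} is well-defined on the relevant $e$. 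Consequently the unimodularity requirement $F(e)\overline{F(e)}=1$ places $e$ among the roots of the fundamental polynomial $\mathcal{P}$, and steps \#4--\#5 reconstruct the entire sextuple from $e$ using $e+s_1+s_2=-1-a-b$ and the Decomposition formula; hence the sextuple of $H$ belongs to $SOL_B$.

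The crux --- and the precise point at which the finiteness hypothesis is indispensable --- is confirming that $SOL_B$ and $SOL_C$ are finite yet complete. The assumption that only finitely many invertible $B$ and $C$ yield three orthogonal rows and columns forces $\mathcal{P}_{a,b,c,d}\not\equiv 0$ (and likewise its transpose $\mathcal{P}_{a,c,b,d}$), since a vanishing $\mathcal{P}$ is exactly the degenerate branch of step \#3 that produces a continuum of admissible $e$. With $\mathcal{P}\not\equiv 0$ the polynomial has only finitely many unimodular roots, each giving finitely many sextuples, so $SOL_B$ and $SOL_C$ are finite and, by the argument above, contain every candidate block. Corollary \ref{embC} now applies without change: $E$ embeds into a complex Hadamard matrix precisely when some pair $(B,C)\in SOL_B\times SOL_C$ yields a unimodular $D=-CE^\ast(B^{-1})^\ast$, which is the exact test of step \#7; since $H$ itself furnishes such a pair, it is recovered. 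The genuinely delicate work is thus not any individual computation but the bookkeeping that each branch of steps \#3--\#5 either terminates in the excluded families $S_6^{(0)}$ or $K_6^{(3)}$ or feeds faithfully into $SOL_B$, so that no admissible matrix escapes enumeration.
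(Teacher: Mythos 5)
The statement you set out to prove is a \emph{conjecture}, and the paper offers no proof of it; what you have actually written is, in essence, an argument for the conditional theorem at the end of Section 4 (``Start from a submatrix $E$ \dots suppose that there are only finitely many (invertible) candidate submatrices $B$ and $C$ \dots''). Your soundness/completeness assembly tracks that theorem's justification faithfully enough. But Conjecture \ref{C2} is strictly stronger, and your argument smuggles in its open content as a hypothesis. Concretely, you write that the assumption of finitely many invertible candidates $B$ and $C$ ``forces $\mathcal{P}_{a,b,c,d}\not\equiv 0$'' --- yet when proving the conjecture no such assumption is available. One would have to \emph{show} that every complex Hadamard matrix of order $6$ inequivalent to $S_6^{(0)}$ and outside $K_6^{(3)}$ admits a submatrix $E$ satisfying \eqref{canCC} for which both fundamental polynomials $\mathcal{P}_{a,b,c,d}$ and $\mathcal{P}_{a,c,b,d}$ are not identically zero. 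The paper explicitly leaves this open: it remarks that locating the common roots of the coefficients of $\mathcal{P}$ by a Gr\"obner-basis computation ``turned out to be a task beyond our capabilities,'' and it only \emph{conjectures} that the case $\mathcal{P}\equiv 0$ can be excluded. Your treatment of step \#3 also misreads the case analysis slightly: Proposition \ref{CTr} rules out only Case $1$ (identical vanishing of both polynomials); Case $2$ (common vanishing at particular unimodular $e$) is not excluded but handled on a separate branch that feeds additional sextuples into $SOL_B$ via \eqref{lin}, and a genuine completeness argument must capture matrices arising there as well.

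A second, independent gap: even granting exhaustiveness of Construction \ref{mC} under the finiteness hypothesis, the conjecture asserts that $G_6^{(4)}$ is a \emph{four-parameter} family and that the three listed classes form the complete list. Your proposal says nothing about the dimension count --- why the input quadruples $(a,b,c,d)$ constitute four genuine, independent degrees of freedom (the paper supports this only by ``experimental results'' in step \#1) --- nor about why no further exceptional class can arise where the construction degenerates. Both points are open problems, not lemmas one can cite; your argument should therefore be read as a reasonable sketch of the paper's conditional theorem, not as a proof of the conjecture, which remains unproven in the paper and in the literature.
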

It would be nice to understand the structure of $G_6^{(4)}$ more thoroughly and express the entries of these matrices by some well-chosen trigonometric functions in a similar fashion as $K_6^{(3)}$ is described, however, as we have encountered sextic polynomials already the appearance of such formulas is somewhat unexpected. Also, it is natural to ask which matrices satisfy the conditions of Corollary \ref{CE}. An algebraic characterization of these matrices might lead to a deeper understanding of the generic family $G_6^{(4)}$ and hopefully to the desired full classification of complex Hadamard matrices of order $6$.

\end{document}